\newtheorem{theorem}{Theorem}[section]
\newtheorem{lemma}[theorem]{Lemma}
\theoremstyle{definition}
\theoremstyle{remark}
\numberwithin{equation}{section}
\definecolor{blck}{rgb}{0,0,0}
\definecolor{darkred}{rgb}{.6,.1,0}
\definecolor{blue}{rgb}{0,0,1}
\definecolor{red}{rgb}{1,0,0}
\newcommand{\jmp}[1]{[\![#1]\!]}                     
\newcommand{\Eh}{\mathscr{E}_h}
\def\calT{{\mathcal T}}
\def\eps{{\epsilon}}
\DeclareMathOperator{\sech}{sech}
\DeclareMathOperator{\csch}{csch}
\begin{document}

\title{Optimally convergent HDG method for third-order Korteweg-de Vries type equations}




\author{Bo Dong}
\address{Department of
Mathematics, University of Massachusetts Dartmouth, 285 Old Westport Road, North Dartmouth, MA 02747, USA}
\curraddr{} \email{bdong@umassd.edu}

\subjclass[2000]{Primary 65M60, 65N30}

\date{}

\dedicatory{}

\begin{abstract}
We develop and analyze a new hybridizable discontinuous
Galerkin (HDG) method for solving third-order Korteweg-de Vries type equations. The approximate solutions are defined by a discrete version of a characterization of the exact solution in terms of the solutions to local problems on each element which are patched together through transmission conditions on element interfaces.
We prove that the semi-discrete scheme is stable with proper choices of stabilization function in the numerical traces. For the linearized equation, we carry out error analysis and show that the approximations to the exact solution and its derivatives have optimal convergence rates. In numerical experiments, we use an implicit scheme for time discretization and the Newton-Raphson method for solving systems of nonlinear equations, and observe optimal convergence rates for both the linear and the nonlinear third-order equations.
\end{abstract}

\maketitle

\pagestyle{myheadings} \thispagestyle{plain} \markboth{
B. DONG}
{HDG method for KdV equations}

\section{Introduction}

In this paper, we develop and analyze a new hybridizable
discontinuous Galerkin (HDG) method for the following initial-boundary value problem of the Korteweg-de Vries (KdV) type equation on a finite domain 
\begin{equation}
\label{eq:prob}
\begin{split}
u_t+u_{xxx} + F(u)_x\,&=\,f\;\; \quad\quad   {\textrm{ for } }\; x\in\Omega:=(a,\;b), t\in (0, T],\\
u\,&=\,u_0\quad\quad  {\textrm{ in } }\Omega \textrm{ for }t=0,\\
u \,&=\, u_D \quad\quad  {\textrm {on} }\; \partial \Omega:= \{a,b\},\\
u_x \,&=\, q_N \quad\quad {\textrm {on} }\;\partial \Omega_N:=\{b\}.
\end{split}
\end{equation}
Here $f \in L^2(\Omega)$ and $F(u)=\beta u^m$, where $\beta$ is a constant and $m\ge 0$ an integer. 
The well-posedness of the problem \eqref{eq:prob} and properties of the solution have been theoretically and numerically studied; see \cite{BonaSunZhang03,Holmer06,BonaChenSunZhang07,BonaSunZhang09,GoubetShen07,SkogestadKalisch09} and references therein.

KdV type equations play an important
role in applications, such as fluid mechanics \cite{Phillips74,Buckingham97,PandaDawsonZhangKennedyWesterinkDonahue14}, nonlinear optics \cite{BiswasRahmanDas11,Horsley16}, acoustics \cite{Schamel73,Tran79}, plasma physics \cite{Braginskii65,ZabuskyKruskal65,TassiMorrisonWaelbroeckGrasso08,ShuklaEliasson11}, and Bose-Einstein condensates \cite{Tagare73,KamchatnovShchesnovich04} among other fields. They also have an enormous impact on the development of nonlinear mathematical
science and theoretical physics. Many modern areas were opened up as a consequence of the basic research on KdV equations. Due to their importance in applications and
theoretical studies, there has been a lot of interest in developing accurate and efficient numerical methods for KdV equations. In particular, an ongoing effort on developing discontinuous Galerkin (DG) methods
for KdV type equations has been made in the last decade. The first DG method, the local discontinuous Galerkin (LDG) method, for the KdV equation was introduced in 2002 by Yan and Shu in \cite{YanShu02} and further studied for the linear case in \cite{LiuYan06,XuShu07,XuShu12,HuffordXing14}.
In \cite{ChengShu08}, a DG method for the $KdV$ equation was devised by using
repeated integration by parts.
Recently, several conservative DG methods \cite{BonaChenKarakashianXing13,ChenCockburnDongDG16,KarakashianXing16} were developed for KdV type equations to
preserve quantities such as the mass and the $L^2$-norm of the solutions. 
When solving KdV equations, one can use these DG methods for spatial discretization together with explicit schemes for time-marching if the coefficient before the third-order derivative is very
small. However, when such coefficient is of order one, for example, implicit
time-marching methods might be the methods of choice.

Traditional DG methods, despite their prominent features such as $hp$-adaptivity and local conservativity, were criticized for having larger number of
degrees of freedom than continuous finite element methods  when solving steady-state problems or problems that require implicit-in-time solvers.
Here, we develop an HDG method which is very suitable for solving KdV equations when implicit time-marching is used.
HDG methods \cite{CockburnGopalakrishnanLazarov09, CockburnDongGuzmanSFH08,CockburnGuzmanWang09,CockburnGopalakrishnanSayas10} were first introduced for diffusion problems and they provide optimal approximations to both the potential and the flux.
Due to the feature that the global coupled degrees of freedom only live on element interfaces, they are significantly advantageous for solving steady-state problems or time-dependent problems that require implicit time-marching.
In \cite{ChenCockburnDongHDG16}, we introduced the first family of HDG methods for stationary third-order linear equations, 
which allow the approximations to the exact solution $u$ and its derivatives $u_x$ and $u_{xx}$ to have different polynomial degrees.
We proved superconvergence properties of these methods on projection of errors and numerical traces, and numerical results indicate that 
the HDG method using the same polynomial degree $k$ for all three variables is quite robust with respect to the choice of the stabilization function
and provides a converging postprocessed solution with order $2k+1$ with the least amount of degrees of freedom.
This suggests that the HDG method using the same polynomial degrees for all variables is the method of choice for solving one-dimensional third-order problems.
Therefore, in this paper we extend this HDG method to time-dependent third-order  KdV type equations.

To construct the HDG method for KdV equations, we follow the
approach used in   \cite{ChenCockburnDongHDG16} for stationary third-order equations. That is, given any mesh of the domain,
we show that the exact solution can be obtained by solving the equation on each element with provided boundary data that are determined by transmission conditions.
Then we define  HDG methods by a discrete version of this characterization, which ensures that the only globally-coupled degrees of freedom are those
associated to the numerical traces on element interfaces. In \cite{ChenCockburnDongHDG16}, it was shown that HDG methods derived by
providing boundary data to local problems in different ways are indeed equivalent to each other when the stabilization function is finite and nonzero.
So here we just need to consider the one that takes
the numerical trace of $u$ at both ends
of the interval and the numerical trace of $u_{xx}$ at the right end as boundary data for the local problems.
Our method is different from the HDG method in \cite{SamiiPandaMichoskiDawson16}, which was designed from implementation 
 point of view.
That HDG method involves two sets of numerical traces for $u_x$,  and there is no error analysis for the method.

Our way of devising HDG methods from the characterization of the exact solution allows us to carry out stability and error analysis. We first apply an energy
argument to find conditions on the stabilization function in the numerical traces, under which the HDG method has a unique solution for KdV type equations. Then by deriving four energy identities and combining them together, we prove that the method has optimal approximations
to $u$ as well as its derivatives $u_x$ and $u_{xx}$ for linear equations; this technique is similar to that in \cite{XuShu12}.  In implementation,  implicit  time-marching
schemes such as BDF or DIRK methods can be used, and at each time step a stationary third-order equation is solved by the HDG method together with the Newton-Raphson method (see Appendix A).
Due to the one-dimensional setting of the KdV equations, the global
matrix of the HDG method that needs to be numerically inverted at each time step is independent of the polynomial degree of the approximations,
its size is only $2N+1$, where $N$ is the number of intervals of the mesh, and its condition number is
of the order of $h^{-2}$, where $h$ denotes the size of the intervals of the mesh.


The paper is organized  as follows. In Section \ref{sec:mainresults}, we define the HDG method for third-order KdV type equations and
state and discuss our main results. The details of all the proofs are given in Section \ref{sec:proofs}.
We show numerical results in Section \ref{sec:numericaltest} and some concluding remarks in Section \ref{sec:conclude}. The details on implementation of the method are in Appendix A.

\section{Main Results}
\label{sec:mainresults} In this section, we state and discuss our main results. We begin by describing the characterizations of the exact solution that the HDG method
is a discrete version of. We then introduce our HDG method for KdV type equations, and state our stability result and optimal a priori error estimate.

\subsection{Characterizations of the exact solution}
To display the characterizations of the exact solution we are going to work with, let us first rewrite our third-order model equation as the following first-order system:
\begin{subequations}
\label{eq:mixedprob}
\begin{alignat}{1}
q - u_x \,&=\, 0, \quad p - q_x\,=\, 0,\quad u_t+ p_x + F(u)_x\,=\,f \quad{\textrm {for} }\;x\in\Omega, t\in (0, T],\\
\intertext{with the initial and boundary conditions}
u\,&=\,u_0\quad\quad  {\textrm{ in } }\Omega \textrm{ for }t=0,\\
u \,&=\, u_D \quad\quad  {\textrm {on} }\; \partial \Omega, \\
q \,&=\, q_N \quad\quad {\textrm {on} }\;\partial\Omega_N.
\end{alignat}
\end{subequations}
We partition  the domain $\Omega$ as
\[
\calT_h=\{I_i:=(x_{i-1}, x_i): a=x_0<x_1<\cdots<x_{N-1}<x_N=b\},
\]
and introduce the set  of the boundaries of its elements, $\partial\calT_h:=\{ \partial I_i: i=1,\dots,N\}$. We also set $\Eh:=\{x_i\}_{i=0}^N$,
$h_i = x_i - x_{i-1}$ and  $h:=\max_{i=1}^N h_i$.

We know that, when $f$ is smooth enough, if we provide the values $\{\widehat{u}_i\}_{i=0}^N$ and $\{\widehat{p}_i\}_{i=1}^N$ and, for each $i=1,\dots,N$, solve the local problem
\begin{alignat*}{1}
&
Q - U_x=0,
 \quad
P - Q_x=0,
 \quad
U_t+P_x + F(U)_x=f
 \quad\text{ in } I_i,
\\
&
U =u_0\;\; \text{ for } t=0,
 \quad
U(x^+_{i-1})=\widehat{u}_{i-1},
 \quad
U(x^-_{i})=\widehat{u}_{i},
 \quad
P(x^-_{i})=\widehat{p}_{i},
\end{alignat*}
then $(P,Q,U)$ coincides with the solution $(p,q,u)$ of \eqref{eq:mixedprob} if and only if the transmission conditions
\begin{equation*}
Q(x_i^-)=Q(x_i^+),\quad
P(x_i^-)=P(x_i^+),\qquad i=1,\dots,N-1
\end{equation*}
and the boundary conditions
\begin{equation*}
U=u_D \;\;{\textrm {on} }\; \partial \Omega, \quad \quad Q =q_N \;\; {\textrm {on} }\; \partial \Omega_N
\end{equation*}
are satisfied. There are other possible characterizations of the exact solution corresponding to different choices of boundary data for the local problem;
see \cite{ChenCockburnDongHDG16}. Note that
for these characterizations, the boundary data of the local problems are the unknowns of a global problem obtained from the transmission conditions and boundary conditions,
and the system of equations for the global unknowns is square.

\subsection{HDG method}
To define our HDG method, we first introduce the finite element spaces to be used. We let the
approximations $(u_h, q_h, p_h, \widehat{u}_h, \widehat{q}_h, \widehat{p}_h)$ to
$(u|_\Omega,q|_\Omega,p|_\Omega,u|_{\Eh},q|_{\Eh},p|_{\Eh})$ be in the space
$W_h^k\times W_h^k \times W_h^k \times L^2(\Eh)\times L^2(\partial\calT_h) \times L^2(\partial\calT_h)$ where
\begin{alignat*}{3}
{W}_h^k & = \{w\in L^2(\mathcal{T}_h):&&\quad
   w|{_{I_i}} \in {P}_{k}(I_i)&&\quad\forall\; i=1,\cdots,N\}.
\end{alignat*}
Here {$P_k(I_i)$ is the space of polynomials of degree at most $k$ on the domain $I_i$}. For any function $\zeta$ lying in $L^2(\partial\calT_h)$, we denote
its values on $\partial I_i:=\{x_{i-1}^+, x_i^-\}$ by $\zeta(x_{i-1}^+)$ (or simply $\zeta^+_{i-1}$) and $\zeta(x_i^-)$ (or simply $\zeta^-_i$).
Note that $\zeta(x_{i}^+)$ is not necessarily equal to
$\zeta(x_i^-)$. In contrast, for any $\eta$ in the space $L^2(\Eh)$, its value at $x_i$, $\eta(x_i)$ (or simply $\eta_i$) is uniquely defined; in this case,
 $\eta(x_i^-)$ or $\eta(x_{i}^+)$ mean nothing but $\eta(x_i)$.

To obtain the HDG formulation, we use a discrete version of the characterization of the exact solution. Assuming that the values  $\{\widehat{u}_{hi}\}_{i=0}^N$ and $\{\widehat{p}^{\;-}_{hi}\}_{i=1}^N$ are given, for each $i=1,\dots,N$, we solve
a local problem on the element $I_i$ by using a Galerkin method. To describe it, let us introduce the following notation.  By $(\varphi, v)_{I_i}$, we denote the integral of $\varphi$ times $v$ on the interval $I_i$, and by $\langle \varphi,v n\rangle_{\partial I_i}$ we simply mean the expression $\varphi(x_i^-)v(x_i^-)n(x_i^-)
+\varphi(x_{i-1}^+)v(x_{i-1}^+)n(x_{i-1}^+)$. Here $n$ denotes the outward unit normal to $I_i$: $n(x_{i-1}^+):=-1$ and $n(x_i^-):=1$.

On the element $I_i=(x_{i-1}, x_i)$, we give $f$ and the boundary data $\widehat{u}_{h\, i-1}, \widehat{u}_{h\,i}$
and $\widehat{p}^{\; -}_{h\, i}$ and take the HDG approximate solutions $(p_h,q_h,u_h)\in P_{k}(I_i)\times P_{k}(I_i)\times P_{k}(I_i)$ to be the solution of the equations
\begin{alignat*}{2}
({q}_h,{v})_{I_i} + (u_h,v_x)_{I_i} - \langle \widehat{u}_h,{v}n \rangle_{\partial I_i} & = 0, \\
({p}_h,{z})_{{I_i}} + (q_h,z_x)_{I_i} - \langle \widehat{q}_h,{z}n \rangle_{\partial I_i} & = 0,\\
({{u}_h}_t,{w})_{I_i}-(p_h+F(u_h),w_x)_{I_i} + \langle \widehat{p}_h+\widehat{F}_h,{w}n \rangle_{\partial I_i} & = (f,{w})_{I_i},
\end{alignat*}
for all $(v, z, w)\,\in\,P_{k}(I_i)\times P_{k}(I_i) \times P_{k}(I_i)$, where the remaining undefined numerical traces are given by
\begin{alignat*}{2}
&\begin{cases}
\widehat{p}_h=p_h  + \tau_{pu}\,({\widehat{u}_{h\,i-1}} - u_h)\,n &\mbox{ at } x_{i-1}^+,\\
\widehat{q}_h=q_h  + \tau_{qu}\,( {\widehat{u}_{h\, i-1}} - u_h)\,n &\mbox{ at } x_{i-1}^+,\\
\widehat{q}_h= q_h  + \tau_{qu} \,( {\widehat{u}_{h\, i}} - u_h)\,n +\tau_{qp}\,( {\widehat{p}^{\;-}_{h\,i}} -p_h)\,n&\mbox{ at } x_{i}^-,\\
\widehat{F}_h=F(\widehat{u}_h) -\tau_F(\widehat{u}_h, u_h) (\widehat{u}_h -u_h)\, n & \mbox{ at } x_{i-1}^+ \mbox{ and } x_i^-.\\
\end{cases}
\end{alignat*}
The functions $\tau_{qu}, \tau_{pu}, \tau_{qp}$, and $\tau_F(\widehat{u}_h, u_h)$
are defined on $\partial\calT_h$ and are called the components of the {\em stabilization function}; they have to be properly chosen to ensure that the above problem has a unique solution. In particular, due to the nonlinearity of $F$, the function $\tau_F(\cdot,\cdot): \partial\mathcal{T}_h\rightarrow \mathbb{R}$ can be nonlinear in terms of $\widehat{u}_h$ and $u_h$. In the case of $F=0$, we simply take $\tau_F=0$.

It remains to impose the transmission conditions
\[
\jmp{\widehat{q}_h } (x_i) = 0
\quad\mbox{ and }\quad
\jmp{\widehat{p}_h+\widehat{F}_h} (x_i) = 0
\qquad \mbox{ for all } i =1,\dots,N-1,
\]
and the boundary conditions
\[
\widehat{u}_h  = u_D  \;\;{\textrm {on} }\; \partial \Omega
\quad\mbox{ and }\quad
\widehat{q}_h  = q_N \;\;{\textrm {on} }\; \partial \Omega_N.
\]
Here, $\jmp{\zeta}(x_i):=\zeta(x_i^-)-\zeta(x_i^+)$. This completes the definition of the HDG methods using the characterization of the exact solution. Note that this way of defining the HDG methods immediately provides a way to implement them.

{On} the other hand, the above presentation of the HDG methods is not very well suited for their analysis. 
Thus, we now rewrite it in a more compact form using the notation
$$(\varphi, v):=\sum_{i=1}^N (\phi, v)_{I_i},  \quad \langle \varphi, v n\rangle:=\sum_{i=1}^N \langle \varphi, v n\rangle_{\partial I_i}.$$
Let
$$M_h(g):=\{ \zeta\in L^2(\Eh): \;\; \zeta|_{\partial \Omega}=g\},\qquad \tilde{M}_h:= L^2(\Eh\setminus\{a\}).
$$
The approximation provided by the HDG method,  $(u_h, q_h, p_h, \widehat{u}_h, \widehat{p}_h^{\,-})$, is the element  of
$W_h^{k}\times W_h^{k} \times W_h^{k} \times M_h(u_D)\times  \tilde{M}_h$ which solves the equations
\begin{subequations}
\label{eq:Nmethod}
\begin{alignat}{2}
\label{eq:Nmethod1}
({q}_h,{v})  + (u_h,v_x)  - \langle \widehat{u}_h,{v}n \rangle & = 0, \\
\label{eq:Nmethod2}
({p}_h,{z})  + (q_h,z_x)  - \langle \widehat{q}_h,{z}n \rangle  & = 0,\\
\label{eq:Nmethod3}
({u_h}_t,{w}) -(p_h+F(u_h),w_x) + \langle \widehat{p}_h+\widehat{F}_h,{w}n \rangle  & = (f,{w}),\\
\intertext{and}
\label{eq:Nmethod5}
\langle \widehat{q}_h, \mu n\rangle = \langle q_N, \mu n\rangle_{\partial\Omega_N},
\quad
\langle \widehat{p}_h+\widehat{F}_h, \chi n\rangle =& 0
\end{alignat}
for all $(v, z, w,\mu,\chi)\,\in\,W_h^{k}\times W_h^{k} \times W_h^{k}\times \tilde{M}_h\times M_h(0)$, where, on $\partial\calT_h$, we have
\begin{equation}
\label{eq:Nmethod4}
\begin{cases}
\widehat{p}_h^{\,+}=p_h^+ + \tau_{pu}^+\,(\widehat{u}_h - u_h^+)\,n^+,&\\
\widehat{q}_h^{\,+}=q_h^+  + \tau_{qu}^+\,(\widehat{u}_{h} - u_h^+)\,n^+,\\
\widehat{q}_h^{\,-}=q_h^- + \tau_{qu}^-\,(\widehat{u}_h - u_h^-)\,n^- +\tau_{qp}^-\,({\widehat{p}_h^{\,-}} -p_h^-)\,n^-,\\
\widehat{F}_h= F(\widehat{u}_h) -\tau_F(\widehat{u}_h,u_h)\,(\widehat{u}_h -u_h)\,n.
\end{cases}
\end{equation}
\end{subequations}


It is not difficult to define HDG methods that are associated to other characterizations of the exact solution, but
these methods 
are actually the same, provided
that the corresponding stabilization function allows for the transition from one characterization to the other; see  \cite{CockburnGopalakrishnan09,ChenCockburnDongHDG16}. In fact, the choice of characterization to use is more relevant for the actual implementation of the HDG method
rather than for its actual definition.
The implementation of the HDG method \eqref{eq:Nmethod} is discussed in the Appendix.

When above scheme is discretized in time, we can choose the initial approximation ($u_h^0, q_h^0, p_h^0, \widehat{u}_h^0, \widehat{p}_h^0$) to be the HDG approximate solutions of
the stationary equation $v + v_{xxx} + F(v)_x = g,$ where $g=u_0+(u_0)_{xxx}+F(u_0)_x$ and $u_0$ is the initial data of the time-dependent problem \eqref{eq:prob}; see \cite{ChenCockburnDongHDG16} for HDG methods on stationary third-order equations.
The initial approximation $(u_h^0, q_h^0, p_h^0, \widehat{u}_h^0, \widehat{p}_h^{0})$, is the element  of
$W_h^{k}\times W_h^{k} \times W_h^{k} \times M_h(u_D)\times  \tilde{M}_h$ which solves the equations
\begin{alignat*}{2}
({q}_h^0,{v})  + (u_h^0,v_x)  - \langle \widehat{u}_h^0,{v}n \rangle & = 0, \\
({p}_h^0,{z})  + (q_h^0,z_x)  - \langle \widehat{q}_h^0,{z}n \rangle  & = 0,\\
({u_h^0},{w}) -(p_h^0+F(u_h^0),w_x) + \langle \widehat{p}_h^0+\widehat{F}_h^0,{w}n \rangle  & = (g,{w}),\\
\langle \widehat{q}_h^0, \mu n\rangle = \langle q_N, \mu n\rangle_{\partial\Omega_N},
\quad
\langle \widehat{p}_h^0+\widehat{F}_h^0, \chi n\rangle &= 0
\end{alignat*}
for all $(v, z, w,\mu,\chi)\,\in\,W_h^{k}\times W_h^{k} \times W_h^{k}\times \tilde{M}_h\times M_h(0)$, where $\widehat{q}_h^0, \widehat{p}_h^0$, and $\widehat{F}_h^0$ are defined in the same ways as $\widehat{q}_h, \widehat{p}_h$, and $\widehat{F}_h$ in \eqref{eq:Nmethod4}.
 Note that the equations above are almost the same as those in  \eqref{eq:Nmethod} except the third one. 
This way of choosing initial data for time-dependent problems by
solving corresponding stationary problems has been used in \cite{CockburnFuJiSayas, ChenCockburnDongDG16}.

Next, we present our stability result and a priori error estimate of the HDG method  under some conditions on
 the stabilization function. 

\subsection{Stability}

To discuss the $L^2$-stability of the HDG method, we let
$$\tilde{\tau}(u_h, \widehat{u}_h):= \frac{1}{(u_h-\widehat{u}_h)^2}{\int_{\widehat{u}_h}^{u_h}(F(s)-F(\widehat{u}_h)) n\, ds}.$$
We have the following stability result.

\begin{theorem}\label{thm:energy2}
Assume that $u_D=q_N=0$. If the stabilization function satisfies
\begin{equation}\label{eq:Ntau_cond}
\begin{split}
 &(\tau_F^+-\tilde{\tau}^+) -\tau_{pu}^+ -\frac{1}{2}(\tau_{qu}^+)^2 \ge 0,\textrm{ and } \\
  &(\tau_F^--\tilde{\tau}^-) +\frac{1}{2}(\tau_{qu}^-)^2\ge 0, 
  \quad (\tau_F^--\tilde{\tau}^-)(\tau_{qp}^-)^2+\tau_{qu}^-\tau_{qp}^- -\frac{1}{2}\ge 0,
\end{split}
\end{equation}
then for the HDG method \eqref{eq:Nmethod}, we have
$$ \frac{d}{dt}\|u_h\|^2 \le 2(f,u_h)
.$$
\end{theorem}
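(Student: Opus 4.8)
The plan is a standard HDG energy argument. First I would test \eqref{eq:Nmethod1}, \eqref{eq:Nmethod2}, and \eqref{eq:Nmethod3} with $(v,z,w)=(p_h,q_h,u_h)$, and test the second equation of \eqref{eq:Nmethod5} with $\chi=\widehat u_h$ (legitimate since $u_D=0$ forces $\widehat u_h\in M_h(0)$, which gives $\langle\widehat p_h+\widehat F_h,\widehat u_h n\rangle=0$). Taking $w=u_h$ in \eqref{eq:Nmethod3} yields $\tfrac12\tfrac{d}{dt}\|u_h\|^2=(p_h+F(u_h),(u_h)_x)-\langle\widehat p_h+\widehat F_h,u_h n\rangle+(f,u_h)$; integrating $(p_h,(u_h)_x)$ by parts elementwise and feeding in \eqref{eq:Nmethod1} with $v=p_h$ rewrites it as $\langle p_h,(u_h-\widehat u_h)n\rangle+(q_h,p_h)$, and then \eqref{eq:Nmethod2} with $z=q_h$ rewrites $(q_h,p_h)$ as $\langle\widehat q_h,q_h n\rangle-\tfrac12\langle q_h^2,n\rangle$. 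For the convective part I would use an antiderivative $\mathcal F$ of $F$ normalized by $\mathcal F(0)=0$, so that $(F(u_h),(u_h)_x)=\langle\mathcal F(u_h),n\rangle$; since $\widehat u_h$ is single valued on $\Eh$ and vanishes on $\partial\Omega$ one has $\langle\mathcal F(\widehat u_h),n\rangle=0$, and the definition of $\tilde\tau$ is precisely such that $\langle\mathcal F(u_h)-\mathcal F(\widehat u_h),n\rangle-\langle F(\widehat u_h),(u_h-\widehat u_h)n\rangle=\sum_{\partial\calT_h}\tilde\tau\,(u_h-\widehat u_h)^2$. Collecting all of this, $\tfrac12\tfrac{d}{dt}\|u_h\|^2=\mathrm{BT}+(f,u_h)$ with
\[
\mathrm{BT}=\langle(p_h-\widehat p_h)(u_h-\widehat u_h),n\rangle+\langle\widehat q_h,q_h n\rangle-\tfrac12\langle q_h^2,n\rangle-\sum_{\partial\calT_h}(\tau_F-\tilde\tau)(u_h-\widehat u_h)^2 ,
\]
so the theorem reduces to proving $\mathrm{BT}\le 0$.

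I would then substitute the numerical traces \eqref{eq:Nmethod4} into $\mathrm{BT}$ and reorganize it as a sum over element endpoints, treating the left endpoints $x_{i-1}^+$ (where $\widehat p_h$ and $\widehat q_h$ are both prescribed by \eqref{eq:Nmethod4}) separately from the right endpoints $x_i^-$ (where $\widehat q_h$ is prescribed but $\widehat p_h^-$ is an independent unknown), keeping $n(x_{i-1}^+)=-1$, $n(x_i^-)=1$, $n^2=1$ in mind. Writing $\delta:=u_h-\widehat u_h$ and, at a right endpoint, $\pi:=p_h-\widehat p_h^-$ (all symbols denoting the relevant one-sided traces), each left endpoint contributes $-[(\tau_F^+-\tilde\tau^+)-\tau_{pu}^+]\delta^2-\tau_{qu}^+\delta\,q_h-\tfrac12 q_h^2$ and each right endpoint contributes $-(\tau_F^--\tilde\tau^-)\delta^2+\delta\,\pi+\tfrac12 q_h^2-\tau_{qu}^-\delta\,q_h-\tau_{qp}^-\pi\,q_h$. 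Completing the square in $q_h$ at each endpoint, and noting that by \eqref{eq:Nmethod4} the resulting perfect square equals $\widehat q_h^2$, the left endpoint contribution becomes $-[(\tau_F^+-\tilde\tau^+)-\tau_{pu}^+-\tfrac12(\tau_{qu}^+)^2]\delta^2-\tfrac12\widehat q_h^2$ and the right endpoint contribution becomes $-(\tau_F^--\tilde\tau^-)\delta^2+\delta\,\pi-\tfrac12(\tau_{qu}^-\delta+\tau_{qp}^-\pi)^2+\tfrac12\widehat q_h^2$.

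The $\widehat q_h^2$ terms are summed over all endpoints: by the transmission condition $\jmp{\widehat q_h}(x_i)=0$ the two contributions at each interior node cancel, at $x_N=b$ the boundary condition $\widehat q_h=q_N=0$ kills the term, and only $-\tfrac12\widehat q_h(a^+)^2\le 0$ survives — the discrete analogue of the boundary dissipation $-u_x(a)^2$ in the continuous energy identity under these boundary conditions. It then remains to check that the two endpoint quadratic forms are nonpositive. At a left endpoint this is exactly the first inequality in \eqref{eq:Ntau_cond}. At a right endpoint, expanding $\tfrac12(\tau_{qu}^-\delta+\tau_{qp}^-\pi)^2$ exhibits $(\tau_F^--\tilde\tau^-)\delta^2-\delta\,\pi+\tfrac12(\tau_{qu}^-\delta+\tau_{qp}^-\pi)^2$ as a quadratic form in $(\delta,\pi)$ with symmetric matrix
\[
\begin{pmatrix} (\tau_F^--\tilde\tau^-)+\tfrac12(\tau_{qu}^-)^2 & \tfrac12(\tau_{qu}^-\tau_{qp}^--1)\\ \tfrac12(\tau_{qu}^-\tau_{qp}^--1) & \tfrac12(\tau_{qp}^-)^2\end{pmatrix};
\]
its $(1,1)$ entry is nonnegative by the second inequality in \eqref{eq:Ntau_cond}, its $(2,2)$ entry is automatically nonnegative, and its determinant, after simplification, is nonnegative exactly by the third inequality in \eqref{eq:Ntau_cond}, so the matrix is positive semidefinite and the right endpoint contribution is $\le 0$. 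Adding up the endpoint contributions gives $\mathrm{BT}\le 0$, whence $\tfrac{d}{dt}\|u_h\|^2=2\,\mathrm{BT}+2(f,u_h)\le 2(f,u_h)$.

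I expect the main obstacle to be the bookkeeping in the passage from the compact form of $\mathrm{BT}$ to the endpoint sums: tracking the outward-normal signs, correctly distinguishing the prescribed trace $\widehat q_h$ from the genuine unknown $\widehat p_h^-$, and arranging the telescoping of the $\widehat q_h^2$ terms so that exactly the single dissipative term at $x=a$ remains. The one genuinely delicate ingredient is that $\tilde\tau$ is defined precisely so that the convective flux mismatch collapses into the single sum $\sum_{\partial\calT_h}(\tau_F-\tilde\tau)(u_h-\widehat u_h)^2$; this lets the nonlinear case be handled by the same quadratic-form computation as the linear one, after which matching the two positive-semidefiniteness requirements to the last two inequalities of \eqref{eq:Ntau_cond} is a short calculation.
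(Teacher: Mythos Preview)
Your proposal is correct and follows essentially the same energy argument as the paper: test the three local equations with $(u_h,q_h,p_h)$ (the paper phrases this as $(v,z,w)=(-p_h,q_h,u_h)$, which is the same after one integration by parts), use the transmission equations \eqref{eq:Nmethod5} together with $u_D=q_N=0$, absorb the nonlinear flux into a $(\tau_F-\tilde\tau)(u_h-\widehat u_h)^2$ term via the antiderivative identity, and reduce to checking sign-definiteness of endpoint quadratic forms exactly matching \eqref{eq:Ntau_cond}. The only cosmetic difference is that the paper rewrites the $q$-boundary contribution as $\tfrac12\langle(\widehat q_h-q_h)^2,n\rangle+\tfrac12\widehat q_h^{\,2}(x_0)$ directly (using \eqref{eq:Nmethod5}), whereas you complete the square in $q_h$ and then telescope $\widehat q_h^{\,2}$; your explicit $2\times2$ determinant computation at right endpoints is what the paper summarizes as ``easy to check.''
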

Note that if the nonlinear term $F=0$, then we have $\tau_F =\tilde{\tau}=0$ and the condition \eqref{eq:Ntau_cond} in the Theorem above can be simplified as
 \begin{alignat}{1}\label{eq:tau_cond}
 -\tau_{pu}^+ -\frac{1}{2}{(\tau_{qu}^+)}^2\ge 0\quad\textrm{ and } \qquad\tau_{qu}^-\tau_{qp}^- -\frac{1}{2}\ge 0.
\end{alignat}
If $F(u)\neq 0$, we just need to have $\tau_F\ge \tilde{\tau}$ and take $\tau_{qu}^\pm, \tau_{pu}^+$ and $\tau_{qp}^-$ to satisfy \eqref{eq:tau_cond}. Since
$$\tilde{\tau}=\frac{1}{(u_h-\widehat{u}_h)^2}{\int_{\widehat{u}_h}^{u_h}F'(\xi)(s-\widehat{u}_h) n\, ds}\le \frac{1}{2} \sup_{s\in J(u_h, \widehat{u}_h)}|F'(s)|,$$
where $J(u_h, \widehat{u}_h)=[\min\{u_h,\widehat{u}_h\}, \max\{u_h,\widehat{u}_h\}]$, the stabilization function $\tau_F$ satisfies the condition $\tau_F\ge \tilde{\tau}$ if
$$\tau_F\ge \frac{1}{2}\sup_{s\in J(u_h, \widehat{u}_h)}|F'(s)|.$$
For other choices of $\tau_F$ which satisfies the condition  $\tau_F\ge \tilde{\tau}$, see \cite{NguyenPeraireCockburn09}.

\subsection{A priori error estimate for linear equations}
Now we consider the convergence properties of our HDG method for linear equations in which $F=0$. We proceed as follows. We first define
an auxiliary projection and state its optimal approximation property. Then,
we provide an estimate for the $L^2$-norm of the projections of the errors in the
primary and auxiliary variables.

Let us introduce a key auxiliary projection that is tailored to the numerical traces. The projection of the function $(u,q,p)\in
H^1(\mathcal{T}_h)\times H^1(\mathcal{T}_h)\times H^1(\mathcal{T}_h)$,
$\varPi (u, q, p) := (\varPi u, \varPi {q},\varPi p)$, is defined as follows.
{On an element ${I_i}=(x_{i-1}, x_i)$, the projection is the element of ${{ P }}_{k}({I_i})\times{ { P }}_{k}({I_i})\times P_{k}({I_i})$ which solves the following equations:}
\begin{subequations}
\label{eq:proj}
 \begin{alignat}{1}
   \label{eq:projq}
  (\delta_u,v)_{I_i} & = 0  \quad\forall \,\,v \in { P }_{k-1}({I_i}),\\
    \label{eq:projp}
  (\delta_q,z)_{I_i} & = 0  \quad\forall \,\,z \in { P }_{k-1}({I_i}),\\
  \label{eq:proju}
  (\delta_p,w)_{I_i} & = 0  \quad\forall \,\,w \in { P }_{k-1}({I_i}),\\
  \label{eq:projpu}
\delta_p - \tau_{pu}^+\,\delta_u \,n  & =0 \quad\mbox{ on }  x_{i-1}^+,\\
  \label{eq:projqu+}
\delta_q - \tau_{qu}^+\,\delta_u\,n   & =0 \quad\mbox{ on } x_{i-1}^+,\\
  \label{eq:projqu-}
\delta_q - \tau_{qu}^-\,\delta_u\,n -\tau_{qp}^-\,\delta_p\, n & =0 \quad\mbox{ on } x_i^-,
 \end{alignat}
\end{subequations}
where we use the notation $\delta_\omega := \omega - \varPi \omega$ for $\omega = u, q$, and $p$. Note that the last three equations have exactly the same
structure as the numerical traces of the HDG method in \eqref{eq:Nmethod4}.

The following result for the optimal approximation properties of the
projection $\varPi$ was shown in \cite{ChenCockburnDongHDG16}. To state it, we use the following notation. The $H^s(D)$-norm is denoted by $\|\cdot\|_{s, D}$.
We drop the first subindex if $s=0$,  and the second one if $D=\Omega$ or $D={\mathcal T}_h$.
\begin{lemma}
\label{lemma:projapprox}
Suppose that
\begin{equation}
\label{eq:cond_tau}
\tau_{qu}^+ +\tau_{qu}^- -\tau_{pu}^+\tau_{qp}^- \neq 0.
\end{equation}
Then the projection $\varPi$ in \eqref{eq:proj} is well defined on any interval $I_i$. In addition, if 
$\tau_{qu}^+, \tau_{qu}^-, \tau_{pu}^+$ and $\tau_{qp}^-$ are constants, we
have that, for $\omega=u,q$ and $p$, there is a constant $C$ such that
\begin{alignat*}{1}
\|\omega-\varPi \omega\|_{I_i} &\le C\, h^{s+1}
\quad\mbox{ for }s\in [1, {k}],
\end{alignat*}
provided $\omega\in H^{s+1}(I_i)$.
\end{lemma}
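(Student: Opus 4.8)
The plan is to read \eqref{eq:proj} as a square linear system on each element $I_i$, collapse it to a single $3\times 3$ linear system for three scalars, and then extract both well-posedness and the approximation estimate from the invertibility of that $3\times 3$ matrix. On $I_i$ the unknown $(\varPi u,\varPi q,\varPi p)$ has $3(k+1)$ degrees of freedom, while \eqref{eq:projq}--\eqref{eq:proju} impose $3k$ equations and \eqref{eq:projpu}--\eqref{eq:projqu-} impose $3$ more, so the system is square and it suffices to show that the homogeneous problem (i.e. $u=q=p=0$) has only the trivial solution. Decompose $P_k(I_i)=P_{k-1}(I_i)\oplus\operatorname{span}(L)$, where $L$ is the degree-$k$ orthogonal (shifted Legendre) polynomial on $I_i$; then \eqref{eq:projq}--\eqref{eq:proju} force $\varPi u=aL$, $\varPi q=bL$, $\varPi p=cL$ for scalars $a,b,c$. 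Inserting these into \eqref{eq:projpu}--\eqref{eq:projqu-}, using $n(x_{i-1}^+)=-1$, $n(x_i^-)=1$, and the fact that $L(x_{i-1}^+)$ and $L(x_i^-)$ are nonzero (Legendre polynomials do not vanish at the endpoints), gives $c=-\tau_{pu}^+a$, $b=-\tau_{qu}^+a$, and $-b+\tau_{qu}^-a+\tau_{qp}^-c=0$, whence $a\,(\tau_{qu}^++\tau_{qu}^--\tau_{pu}^+\tau_{qp}^-)=0$. Under \eqref{eq:cond_tau} this yields $a=b=c=0$, so $\varPi$ is well defined.

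For the approximation bound, I would compare $\varPi$ with the $L^2(I_i)$-projection $\Pi_k$ onto $P_k(I_i)$. Set $\eta_\omega:=\omega-\Pi_k\omega$ for $\omega=u,q,p$. Since both $\varPi\omega$ and $\Pi_k\omega$ reproduce the $P_{k-1}$-moments of $\omega$, the difference $\rho_\omega:=\Pi_k\omega-\varPi\omega$ lies in $\operatorname{span}(L)$, and $\delta_\omega=\eta_\omega-\rho_\omega$. Substituting this into \eqref{eq:projpu}--\eqref{eq:projqu-} shows that the three scalar coefficients of $\rho_u,\rho_q,\rho_p$ solve precisely the $3\times 3$ system found above, now with right-hand side given by the endpoint values $\eta_u(x_{i-1}^+)$, $\eta_q(x_{i-1}^+)$, $\eta_p(x_{i-1}^+)$, $\eta_q(x_i^-)$, $\eta_p(x_i^-)$. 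Because $\tau_{qu}^\pm,\tau_{pu}^+,\tau_{qp}^-$ are constants and \eqref{eq:cond_tau} holds, this matrix is invertible with $h$-independent inverse, so each coefficient is bounded by $C\max_{\partial I_i}(|\eta_u|+|\eta_q|+|\eta_p|)$. Combining $\|L\|_{I_i}\sim h^{1/2}$ with the standard estimate $\|\eta_\omega\|_{L^\infty(I_i)}\le C h^{s+1/2}\|\omega\|_{s+1,I_i}$ (from $\|\eta_\omega\|_{I_i}\le Ch^{s+1}\|\omega\|_{s+1,I_i}$ and an inverse inequality) then gives $\|\rho_\omega\|_{I_i}\le C h^{s+1}\sum_{\omega'}\|\omega'\|_{s+1,I_i}$; adding $\|\eta_\omega\|_{I_i}\le Ch^{s+1}\|\omega\|_{s+1,I_i}$ yields the claimed bound. (As usual for coupled HDG projections, the estimate for one component in general picks up the $H^{s+1}$-norms of all three through the shared boundary data.)

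The main obstacle is purely bookkeeping of scaling: keeping every constant independent of $h$ and of the element. This reduces to (i) normalizing $L$ so that its endpoint values and $h^{-1/2}\|L\|_{I_i}$ are $h$-independent, which is possible exactly because the $\tau$-components are taken constant, and (ii) the uniform invertibility of the $3\times 3$ matrix, which is precisely condition \eqref{eq:cond_tau}. A scaling/Bramble--Hilbert argument on the reference interval makes both points rigorous, and the whole proof then transfers verbatim to each $I_i$.
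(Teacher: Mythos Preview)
The paper does not supply its own proof of this lemma; it simply quotes the result from \cite{ChenCockburnDongHDG16}. Your argument is the standard one for HDG-type coupled projections and is essentially what one finds in that reference: reduce to a square system on each element, use orthogonality to the degree-$k$ Legendre polynomial to collapse the homogeneous problem to a $3\times 3$ system whose determinant is exactly $\tau_{qu}^++\tau_{qu}^--\tau_{pu}^+\tau_{qp}^-$, and then read off the approximation rate by comparing with the $L^2$-projection and bounding the Legendre coefficient via endpoint values of the $L^2$-projection error.

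One small correction: you cannot invoke an inverse inequality to pass from $\|\eta_\omega\|_{I_i}\le Ch^{s+1}$ to $\|\eta_\omega\|_{L^\infty(I_i)}\le Ch^{s+1/2}$, because $\eta_\omega=\omega-\Pi_k\omega$ is not a polynomial. The bound you need is still true, but it comes directly from a scaling/Bramble--Hilbert argument on the reference interval (which you do mention at the end): the linear functional $\omega\mapsto(\omega-\Pi_k\omega)(\hat x_\ast)$ vanishes on $P_k$ and is bounded on $H^{s+1}(\hat I)$, hence $|\eta_\omega(x_\ast)|\le C\,h^{s+1/2}|\omega|_{H^{s+1}(I_i)}$ after scaling. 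With that fix, your proof is complete; your parenthetical remark that each component's bound in general picks up the $H^{s+1}$-norms of all three is also correct and worth keeping, since the lemma's phrasing is slightly informal on this point.
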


Next, we provide estimates for the $L^2$-norm of the projection of the errors
\[
\epsilon_u :=\; \varPi u - u_h,\quad \epsilon_{q} :=\; \varPi {q} - {q}_h, \quad\epsilon_{p} :=\; \varPi {p} - {p}_h,
\]
and deduce from them  {the} estimates for the $L^2$-norm of the errors
\[
e_u :=\; u - u_h,\quad e_{q} :=\;{q} - {q}_h, \quad e_{p} :=\; {p} - {p}_h.
\]

\begin{theorem}
\label{thm:new}
Suppose that $F(u)=0$ in the problem \eqref{eq:mixedprob} and the exact solution $(u,q,p)\in W^{2,\infty}((0, T]; H^{k+1}(\mathcal{T}_h))\times
                          W^{1,\infty}((0, T]; H^{k+1}(\mathcal{T}_h))\times
                          W^{1,\infty}((0, T]; H^{k+1}(\mathcal{T}_h))$.
If the stabilization function 
 of the HDG method \eqref{eq:Nmethod}
satisfies the condition 
\begin{equation}\label{eq:tau_cond_new}
\begin{split}
&\tau_{qu}^->0, \,\,\tau_{qu}^-\tau_{qp}^-=1, \textrm{ and }\\
 &  \tau_{qu}^+\in [0, 1], \,\,\tau_{pu}^+\in [-1-\sqrt{1-{\tau_{qu}^+}^2},-\frac{1}{2}-\frac{1}{2}{\tau_{qu}^+}^2],
\end{split}
\end{equation}
 then for $k>0$ and $h$ small enough, we have
$$\|\epsilon_u(t)\|+ \|\epsilon_q(t)\| + \|\epsilon_p(t)\|+\|{\epsilon_u}_t(t)\|\le Ch^{k+1}\quad \textrm{ for } 0\le t\le T,
$$
where $C$ is independent of $h$.
\end{theorem}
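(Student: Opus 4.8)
The plan is to obtain the error estimate by an energy argument applied to the equations satisfied by the projections of the errors $\epsilon_u,\epsilon_q,\epsilon_p$, following the technique attributed to \cite{XuShu12}. First I would subtract the HDG equations \eqref{eq:Nmethod} (with $F=0$) from the corresponding weak formulation satisfied by the exact solution, and use the defining properties \eqref{eq:proj} of the projection $\varPi$ to eliminate all terms involving $\delta_q$ and $\delta_p$ against test functions and, crucially, to cancel the numerical-trace terms. This should yield a clean error system of the form
\begin{alignat*}{1}
(\epsilon_q,v)+(\epsilon_u,v_x)-\langle\widehat{\epsilon}_u,vn\rangle &= 0,\\
(\epsilon_p,z)+(\epsilon_q,z_x)-\langle\widehat{\epsilon}_q,zn\rangle &= 0,\\
({\epsilon_u}_t,w)-(\epsilon_p,w_x)+\langle\widehat{\epsilon}_p,wn\rangle &= -({\delta_u}_t,w),
\end{alignat*}
with the trace relations $\widehat{\epsilon}_p^{\,+}=\epsilon_p^++\tau_{pu}^+(\widehat{\epsilon}_u-\epsilon_u^+)n^+$, etc., mirroring \eqref{eq:Nmethod4}, plus the homogeneous transmission and boundary conditions. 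Here the right-hand side of the third equation is $O(h^{k+1})$ in $L^2$ by Lemma~\ref{lemma:projapprox} applied to $u_t$, and this is the only consistency error.

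Next I would derive several energy identities by judicious choices of test functions, exactly as advertised in the introduction (``deriving four energy identities and combining them together''). The obvious first choice $(v,z,w)=(\epsilon_p,-\epsilon_q,\epsilon_u)$ produces, after summing and integrating by parts, an identity of the form $\tfrac12\tfrac{d}{dt}\|\epsilon_u\|^2 + \Theta = -({\delta_u}_t,\epsilon_u)$, where $\Theta$ is a sum of boundary/interface quadratic forms in the trace quantities weighted by the stabilization components. The condition \eqref{eq:tau_cond_new} is precisely what makes $\Theta\ge 0$ (or controllable): on the right end $\tau_{qu}^-\tau_{qp}^-=1$ with $\tau_{qu}^->0$ forces the relevant $2\times2$ quadratic form to be positive semidefinite, and on the left end the interval constraint on $\tau_{pu}^+$ given $\tau_{qu}^+\in[0,1]$ does the same. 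However, this single identity only controls $\|\epsilon_u\|$ in terms of the consistency error and gives no handle on $\|\epsilon_q\|,\|\epsilon_p\|$, nor on $\|{\epsilon_u}_t\|$; that is why additional identities are needed. I would differentiate the error system in $t$ and test again with the time-differentiated variables to get an identity controlling $\|{\epsilon_u}_t\|$ (the consistency error now being $({\delta_u}_{tt},\cdot)$, still $O(h^{k+1})$ by the $W^{2,\infty}$ regularity hypothesis), and I would choose a second set of test functions — something like $(v,z,w)=(\epsilon_q, \text{(combination)}, 0)$ or using ${\epsilon_u}_t$ as a test function in the first equation — to extract $\|\epsilon_q\|$ and $\|\epsilon_p\|$ algebraically from $\|\epsilon_u\|$, $\|{\epsilon_u}_t\|$, and the boundary terms. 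Combining all four identities linearly with appropriate positive weights, absorbing the boundary terms using the stabilization conditions, and applying Gr\"onwall's inequality (using that the initial error $\epsilon_u(0)$ is $O(h^{k+1})$ by the chosen initialization and the stationary estimates of \cite{ChenCockburnDongHDG16}) should close the estimate.

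The main obstacle I anticipate is the bookkeeping of the boundary and interface terms: after integration by parts, $\langle\cdot,\cdot n\rangle$ produces terms at every node, and one must use the transmission conditions $\jmp{\widehat{\epsilon}_q}=\jmp{\widehat{\epsilon}_p}=0$ together with the trace definitions to rewrite everything in terms of the ``gaps'' $\widehat{\epsilon}_u-\epsilon_u^\pm$ and $\widehat{\epsilon}_p^{\,-}-\epsilon_p^-$, and then verify that the resulting quadratic form in these gaps is nonnegative precisely under \eqref{eq:tau_cond_new}. Getting the right combination of the four energy identities so that all indefinite boundary contributions cancel or are dominated — rather than merely bounding each crudely — is the delicate part, and is where the specific numerology of the admissible interval for $\tau_{pu}^+$ and the equality $\tau_{qu}^-\tau_{qp}^-=1$ will be forced. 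A secondary technical point is that to bound $\|\epsilon_p\|$ one typically needs an inverse inequality or a duality step to control interface values of $\epsilon_p$ by $\|\epsilon_p\|$ plus lower-order terms, which is where the restriction $k>0$ and ``$h$ small enough'' enters; I would handle this with a standard scaling argument on each element $I_i$ of size $h_i$, using that the stabilization components are constants. Once the combined estimate reads $\tfrac{d}{dt}\big(\|\epsilon_u\|^2+\|{\epsilon_u}_t\|^2\big)+c\big(\|\epsilon_q\|^2+\|\epsilon_p\|^2+\text{traces}\big)\le C h^{2k+2}+C\big(\|\epsilon_u\|^2+\|{\epsilon_u}_t\|^2\big)$, Gr\"onwall finishes the proof.
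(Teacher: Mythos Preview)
Your overall architecture (error equations, four energy identities, sign control via the stabilization conditions, Gr\"onwall) matches the paper, but two concrete points in the proposal would not go through as written.

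\textbf{First, the error system is not as clean as you claim.} The projection $\varPi$ only kills $\delta_u,\delta_q,\delta_p$ against $P_{k-1}$, not against all of $W_h^k$; see \eqref{eq:projq}--\eqref{eq:proju}. So the first two error equations retain the terms $(\delta_q,v)$ and $(\delta_p,z)$, and these propagate into the energy identities as the quantity $S$ in Lemma~\ref{lemma:energy1}. This is not fatal (they are all $O(h^{k+1})$), but it changes the bookkeeping and, in particular, some of these terms involve $\epsilon_{qt},\epsilon_{pt}$ and must be integrated by parts in time before Gr\"onwall can be applied.

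\textbf{Second, and more seriously, you cannot get $c(\|\epsilon_q\|^2+\|\epsilon_p\|^2)$ as dissipation on the left-hand side.} The third-order operator is dispersive, not coercive in $q,p$: no choice of test functions in the error system will produce a positive multiple of $\|\epsilon_q\|^2$ or $\|\epsilon_p\|^2$ without also generating an uncontrollable derivative term like $(\epsilon_u,\epsilon_{qx})$. The paper's device is different: it mixes the \emph{original} error equations with their \emph{time-differentiated} versions so that the four identities yield $\tfrac12\tfrac{d}{dt}\|\epsilon_q\|^2$ and $\tfrac12\tfrac{d}{dt}\|\epsilon_p\|^2$ directly (e.g.\ test the time-differentiated first equation with $v=\epsilon_q$, the original second with $z=\epsilon_{ut}$, and the original third with $w=-\epsilon_p$ to get $\tfrac{d}{dt}\|\epsilon_q\|^2$). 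The combined estimate is then $\tfrac{d}{dt}\bigl(\|\epsilon_u\|^2+\|\epsilon_q\|^2+\|\epsilon_p\|^2+\|\epsilon_{ut}\|^2\bigr)+\Psi=-S$, with all four norms under the time derivative, and Gr\"onwall is applied to the full sum. Your proposed final inequality, with $\|\epsilon_q\|^2+\|\epsilon_p\|^2$ appearing only as dissipation, is not attainable here.

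Two further points you will hit once you adopt the correct combination. The boundary form $\Psi$ is \emph{not} globally sign-definite under \eqref{eq:tau_cond_new}: part of it is nonnegative, part is the exact time derivative of a nonnegative quantity $\Theta$ (this is precisely where $\tau_{qu}^-\tau_{qp}^-=1$ is used, to turn cross terms into perfect derivatives), and there remains a wrong-sign term $-\tfrac12\widehat{\epsilon}_p^{\,2}(x_N)$ at the outflow boundary that must be estimated separately by testing \eqref{eq:error_eqn3} with the global linear function $w=(x-x_0)/(x_N-x_0)$. Finally, the restriction $k>0$ does not enter via an inverse inequality in the energy step; it comes from the projection estimate in Lemma~\ref{lemma:projapprox} and from the superconvergent initial-data bounds of Lemma~\ref{lemma:inital_approximation1}, which are needed to control $\|\epsilon(0)\|$ and $\Theta(0)$.
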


It is easy to see that if the stabilization function satisfies the condition \eqref{eq:tau_cond_new}, then it also satisfies the conditions \eqref{eq:tau_cond} and \eqref{eq:cond_tau}.
Using Lemma \ref{lemma:projapprox}, Theorem \ref{thm:new} and the triangle inequality, we immediately get the following $L^2$ error estimate for the actual errors.
\begin{theorem}\label{thm:actual_error}
Suppose that the hypotheses of Theorem \ref{thm:new} are satisfied. Then we have
\[
\|e_u(t)\|+\|e_q(t)\|+\|e_p(t)\|+\|e_{u_t}(t)\|\le Ch^{k+1} \quad \textrm{ for } 0\le t\le T,
\]
where $C$ is independent of $h$.
\end{theorem}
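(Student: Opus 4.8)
The plan is to derive the estimate for the actual errors directly from the estimate for the projection-of-errors in Theorem \ref{thm:new}, by inserting the auxiliary projection $\varPi$ as an intermediate quantity and invoking the triangle inequality. Concretely, for the variable $u$ I would write $e_u = u - u_h = (u - \varPi u) + (\varPi u - u_h) = \delta_u + \epsilon_u$, and similarly $e_q = \delta_q + \epsilon_q$ and $e_p = \delta_p + \epsilon_p$. By the triangle inequality, $\|e_u(t)\| \le \|\delta_u(t)\| + \|\epsilon_u(t)\|$, and likewise for $q$ and $p$.

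The next step is to bound each term on the right. The terms $\|\epsilon_u(t)\|$, $\|\epsilon_q(t)\|$, $\|\epsilon_p(t)\|$ are controlled by $Ch^{k+1}$ directly from Theorem \ref{thm:new}, whose hypotheses are assumed here; note that the condition \eqref{eq:tau_cond_new} implies \eqref{eq:cond_tau}, so Lemma \ref{lemma:projapprox} applies with the same stabilization function. The terms $\|\delta_u(t)\|$, $\|\delta_q(t)\|$, $\|\delta_p(t)\|$ are controlled by summing the elementwise bounds of Lemma \ref{lemma:projapprox} over $i = 1,\dots,N$: since $(u,q,p)(t) \in H^{k+1}(\mathcal{T}_h)^3$ for each fixed $t$ by the regularity hypothesis of Theorem \ref{thm:new} (taking $s = k$), we get $\|\delta_\omega(t)\|^2 = \sum_i \|\omega(t) - \varPi\omega(t)\|_{I_i}^2 \le C h^{2(k+1)} \sum_i \|\omega(t)\|_{k+1,I_i}^2 = C h^{2(k+1)} \|\omega(t)\|_{k+1}^2$ for $\omega = u, q, p$. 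Combining, $\|e_u(t)\| + \|e_q(t)\| + \|e_p(t)\| \le C h^{k+1}$ for $0 \le t \le T$.

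For the time-derivative term $\|e_{u_t}(t)\|$, the argument is the same with one extra observation: since the projection $\varPi$ is linear and time-independent, it commutes with $\partial_t$, so $u_t - \varPi(u_t) = \partial_t(u - \varPi u) = \partial_t \delta_u = (\delta_u)_t$ and also $\varPi(u_t) - (u_h)_t = \partial_t(\varPi u - u_h) = {\epsilon_u}_t$. Hence $e_{u_t} = u_t - (u_h)_t = (\delta_u)_t + {\epsilon_u}_t$. The term $\|{\epsilon_u}_t(t)\|$ is bounded by $Ch^{k+1}$ from Theorem \ref{thm:new}. For $\|(\delta_u)_t(t)\|$, I would apply Lemma \ref{lemma:projapprox} to the function $u_t$ in place of $u$: the hypothesis $u \in W^{2,\infty}((0,T];H^{k+1}(\mathcal{T}_h))$ guarantees $u_t(t) \in H^{k+1}(\mathcal{T}_h)$ for a.e.\ $t$, so $\|(\delta_u)_t(t)\| = \|u_t(t) - \varPi(u_t(t))\| \le C h^{k+1}\|u_t(t)\|_{k+1}$. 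Adding everything gives the claimed bound, with $C$ depending on the stated Sobolev norms of the exact solution but independent of $h$.

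Since this is a purely mechanical assembly of two already-established results via the triangle inequality, there is no real obstacle; the only point requiring a moment's care is the interchange of $\varPi$ with $\partial_t$ for the $e_{u_t}$ term, which is justified by the linearity and $t$-independence of the projection definition \eqref{eq:proj}, together with enough time-regularity to make $u_t$ lie in the space on which Lemma \ref{lemma:projapprox} applies.
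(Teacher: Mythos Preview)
Your proposal is correct and matches the paper's own approach exactly: the paper states that Theorem~\ref{thm:actual_error} follows ``immediately'' from Lemma~\ref{lemma:projapprox}, Theorem~\ref{thm:new}, and the triangle inequality, which is precisely the decomposition $e_\omega = \delta_\omega + \epsilon_\omega$ you carry out. Your added remark that $\varPi$ commutes with $\partial_t$ (needed for the $e_{u_t}$ term) is the one detail the paper leaves implicit, and you justify it correctly.
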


\section{Proofs}
\label{sec:proofs}
In this section, we provide detailed proofs of our main results. We first prove Theorem \ref{thm:energy2} on the  $L^2$-stability of the HDG method for general KdV type equations. Then we combine several energy identities to prove the error estimate in Theorem \ref{thm:new} for linear third-order  equations.

%

\subsection{$L^2$-stability}
 Now let us prove Theorem \ref{thm:energy2} on the stability of the HDG method for the KdV equation. We treat the nonlinear term  in
 a way similar to that in \cite{NguyenPeraireCockburn09}.
 \begin{proof}
Taking $\omega=u_h, v=-p_h$ and $z=q_h$ in \eqref{eq:Nmethod1}--\eqref{eq:Nmethod3} and adding the three equations together, we get
\begin{alignat*}{1}
(f, u_h)=&({u_h}_t,u_h)-(p_h+F(u_h), {u_h}_x) + \langle \widehat{p}_h+\widehat{F}_h, u_h n\rangle\\
&-(q_h, p_h)- (u_h, {p_h}_x) + \langle \widehat{u}_h, p_h n\rangle\\
&+(p_h,q_h)+(q_h, {q_h}_x)-\langle \widehat{q}_h, q_h n\rangle.
\end{alignat*}
Using integration by parts and \eqref{eq:Nmethod5}, we have
\begin{equation}\label{eq:energy1_1}
\begin{split}
(f,u_h)=&\frac{1}{2}\frac{d}{dt}\|u_h\|^2 -(F(u_h),{u_h}_x)-\langle \widehat{p}_h+\widehat{F}_h-p_h, (\widehat{u}_h-u_h)n \rangle \\
&+\frac{1}{2}\langle (\widehat{q}_h-q_h)^2, n\rangle + \frac{1}{2}\widehat{q}_h^{\, 2}(0).
\end{split}
\end{equation}
Let $G(s)$ be such that $d G(s)/ds=F(s)$. It is easy to see that
\begin{alignat*}{1}
-(F(u_h),{u_h}_x)&= -(\frac{d}{dx} G(u_h), 1) = -\langle G(u_h), n\rangle
=-\langle \int_{\widehat{u}_h}^{u_h} F(s) ds, n\rangle. 
\end{alignat*}
Using it for the second term on the right hand side of \eqref{eq:energy1_1}, we get that
\begin{alignat*}{1}
(f,u_h)=&\frac{1}{2}\frac{d}{dt}\|u_h\|^2+ \Phi 
+ \frac{1}{2}\widehat{q}_h(0)^2,
\intertext{where}
\Phi=& -\langle \int_{\widehat{u}_h}^{u_h} (F(s)-F(\widehat{u}_h)) ds, n\rangle   -\langle \widehat{F}_h-F(\widehat{u}_h), (\widehat{u}_h-u_h)n \rangle \\
&-\langle \widehat{p}_h-p_h, (\widehat{u}_h-u_h)n \rangle+\frac{1}{2}\langle (\widehat{q}_h-q_h)^2, n\rangle.
\end{alignat*}
Next, we just need to show that $\Phi\ge 0$. Let $$\tilde{\tau}:=\frac{1}{(\widehat{u}_h-u_h)^2}\int_{\widehat{u}_h}^{u_h} (F(s)-F(\widehat{u}_h))n ds.$$ Using the definition of $\widehat{F}_h$ in \eqref{eq:Nmethod4}, we have
$$\Phi=\langle \tau_F-\tilde{\tau},(\widehat{u}_h-u_h)^2 \rangle -\langle \widehat{p}_h-p_h, (\widehat{u}_h-u_h)n\rangle +\frac{1}{2}\langle (\widehat{q}_h-q_h)^2, n\rangle.
$$
By the definition of $\widehat{p}_h$ and $\widehat{q}_h$ in \eqref{eq:Nmethod4}, we get
\begin{alignat*}{1}
\Phi^+ :=\Phi|_{\partial\mathcal{T}_h^+}
       =&\langle \tau_F^+-\tilde{\tau}^+-\tau_{pu}^+ -\frac{1}{2}(\tau_{qu}^+)^2, (\widehat{u}_h-u_h)^2 \rangle_{\partial\mathcal{T}_h^+},\\
\Phi^- :=\Phi|_{\partial\mathcal{T}_h^-}= &\langle \tau_F^- -\tilde{\tau}^-+\frac{1}{2}(\tau_{qu}^-)^2, (\widehat{u}_h-u_h)^2 \rangle_{\partial\mathcal{T}_h^-}+\langle \frac{1}{2}(\tau_{qp}^-)^2, (\widehat{p}_h-p_h)^2\rangle_{\partial\mathcal{T}_h^-}\\
&+\langle \tau_{qu}^-\tau_{qp}^- -1, (\widehat{p}_h-p_h)(\widehat{u}_h-u_h)n \rangle_{\partial\mathcal{T}_h^-}.
\end{alignat*}
It is easy to check that if the stabilization function satisfies the condition \eqref{eq:Ntau_cond}, then we get $\Phi^+\ge 0$ and $\Phi^-\ge 0$. This shows that
$$\frac{1}{2}\frac{d}{dt}\|u_h\|^2\le (f,u_h)
.$$ 

 \end{proof}

\subsection{Error analysis}
In this section, we prove the optimal error estimate for the projections of the errors in Theorem \ref{thm:new} for linear equations with $F=0$. First, we obtain the equations for the projection of the errors.

\subsubsection{The error equations}
From the equations
defining the HDG method, \eqref{eq:Nmethod1}--\eqref{eq:Nmethod3},
and the fact that the exact solution also satisfy these equations, we obtain the following error equations
\begin{alignat*}{1}
(e_q,{v})  + (e_u,v_x)  - \langle \widehat{e}_u,{v} n\rangle & =0,\\
(e_p,{z}) + (e_q,z_x)  -\langle \widehat{e}_q,{z}n \rangle & =0,\\
({e_u}_t,{w}) -(e_p,w_x) + \langle \widehat{e}_p,{w} n\rangle  &=0,
\end{alignat*}
for all $(v, z, w)\,\in\,W_h^{k}\times W_h^{k} \times W_h^{k}$, where $\widehat{e}_\omega=\omega-\widehat{\omega}_h$ for $\omega=u, q$, and $p$.
From \eqref{eq:Nmethod4} and \eqref{eq:Nmethod5}, it is easy to see that
\begin{equation*}
\begin{cases}
\widehat{e}_p^{\,+}=e_p^+ +\tau_{pu}^+\,(\widehat{e}_u - e_u^+)\,n^+,&\\
\widehat{e}_q^{\,+}=e_q^+  + \tau_{qu}^+\,(\widehat{e}_{u} - e_u^+)\,n^+,\\
\widehat{e}_q^{\,-}=e_q^-  + \tau_{qu}^-\,(\widehat{e}_u - e_u^-)\,n^- +\tau_{qp}^-\,({\widehat{e}_p^{\,-}} -e_p^-)\,n^-,
\end{cases}
\end{equation*}
and
\begin{alignat*}{2}
&\langle\widehat{e}_q, \mu\,n\rangle = 0,
\quad
\langle\widehat{e}_p, \chi\,n\rangle = 0
\end{alignat*}
for all $(\mu,\chi)\in \tilde{M}_h\times M_h(0)$.
 Now we set  $$\widehat{\epsilon}_u=\widehat{e}_u\quad\textrm{ and }\quad \widehat{\epsilon}_p^{\,-}=\widehat{e}_p^{\,-},$$  and let
 \begin{equation}
 \label{eq:error_traces}
\begin{cases}
\widehat{\epsilon}_p^{\,+}= \epsilon_p^+ + \tau_{pu}^+\,(\widehat{\epsilon}_u - \epsilon_u^+)\,n^+,&\\
\widehat{\epsilon}_q^{\,+}= \epsilon_q^+ + \tau_{qu}^+\,(\widehat{\epsilon}_u - \epsilon_h^+)\,n^+,\\
\widehat{\epsilon}_q^{\,-}= \epsilon_q^- + \tau_{qu}^-\,(\widehat{\epsilon}_u - \epsilon_u^-)\,n^- +\tau_{qp}^-\,({\widehat{\epsilon}_p^{\,-}} -\epsilon_p^-)\,n^-.
\end{cases}
\end{equation}
Using the equations \eqref{eq:projpu}--\eqref{eq:projqu-}, after some simple algebra manipulations we get that
$$\widehat{\epsilon}_p^{\,+}=\widehat{e}_p^{\,+} \quad\textrm{ and } \quad\widehat{\epsilon}_q^{\,\pm}=\widehat{e}_q^{\,\pm}.$$
Therefore, by the definition of the projection $\varPi$, \eqref{eq:projq}--\eqref{eq:proju},
 we easily obtain the following equations for the projections of errors
\begin{subequations}
\label{eq:error_eqn}
\begin{alignat}{1}
\label{eq:error_eqn1}
(\epsilon_q,{v})+(\delta_q, v) + (\epsilon_u,v_x) - \langle \widehat{\epsilon}_u,{v} n\rangle& =0,\\
\label{eq:error_eqn2}
(\epsilon_p,{z})+(\delta_p, z) + (\epsilon_q,z_x) - \langle \widehat{\epsilon}_q,{z}n \rangle& =0,\\
\label{eq:error_eqn3}
(\epsilon_{ut},{w})+(\delta_{ut}, w)-(\epsilon_p,w_x) + \langle \widehat{\epsilon}_p,{w} n\rangle &=0,\\
\label{eq:error_eqn4}
\langle\widehat{\epsilon}_q, \mu \,n \rangle = 0,
\quad
\langle\widehat{\epsilon}_p, \chi\,n\rangle &= 0
\end{alignat}
for all $(v, z, w,\mu,\chi)\,\in\,W_h^{k }\times W_h^{k } \times W_h^{k }\times \tilde{M}_h\times M_h(0)$.
\end{subequations}

\subsubsection{Energy identities}

To prove the $L^2$-error estimate in Theorem \ref{thm:new},
we begin by establishing a key identity involving the quantity
\[
\|\epsilon\|^2:=\|\epsilon_u\|^2+\|\epsilon_q\|^2+\|\epsilon_p\|^2+\|\epsilon_{ut}\|^2
\]
by energy arguments.

\begin{lemma}
\label{lemma:energy1}
We have that
$$ \frac{1}{2}\frac{d}{dt}\|\epsilon\|^2+ S+\Psi=0,$$
where
\begin{alignat*}{1}
 S=&(\delta_{ut},\epsilon_u)-(\delta_q, \epsilon_p)+(\delta_p, \epsilon_q)
    +({\delta_q}_t, \epsilon_q)+(\delta_p, \epsilon_{ut})-(\delta_{ut},\epsilon_p)\\
   &+({\delta_p}_t,\epsilon_p) -({\delta_u}_t, {\epsilon_q}_t) +({\delta_q}_t,\epsilon_{ut})
    +(\delta_{utt},\epsilon_{ut}) -({\delta_q}_t, {\epsilon_p}_t)+({\delta_p}_t, {\epsilon_q}_t),\\
\Psi=& -\langle \widehat{\epsilon}_p-\epsilon_p, (\widehat{\epsilon}_u-\epsilon_u)n\rangle
    +\frac{1}{2}\langle (\widehat{\epsilon}_q-\epsilon_q)^2, n\rangle\\
  & +\langle\widehat{\epsilon}_q-\epsilon_q, (\widehat{\epsilon}_{ut}-\epsilon_{ut})n\rangle
    +\frac{1}{2}\langle (\widehat{\epsilon}_p-\epsilon_p)^2, n\rangle\\
  & +\langle \widehat{\epsilon}_{qt}-\epsilon_{qt}, (\widehat{\epsilon}_p-\epsilon_p)n\rangle
    +\frac{1}{2}\langle (\widehat{\epsilon}_{ut}-\epsilon_{ut})^2, n\rangle\\
  & -\langle\widehat{\epsilon}_{pt}-\epsilon_{pt},(\widehat{\epsilon}_{ut}-\epsilon_{ut})n\rangle
    +\frac{1}{2}\langle (\widehat{\epsilon}_{qt}-\epsilon_{qt})^2,n\rangle\\
  &  +\frac{1}{2}\widehat{\epsilon}_q^{\;2}(x_0)+\frac{1}{2}(\widehat{\epsilon}_p+\widehat{\epsilon}_{qt})^2(x_0)-\frac{1}{2}\widehat{\epsilon}_p^{\;2}(x_N).
\end{alignat*}

\end{lemma}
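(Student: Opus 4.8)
The plan is to write $\|\epsilon\|^2=\|\epsilon_u\|^2+\|\epsilon_q\|^2+\|\epsilon_p\|^2+\|\epsilon_{ut}\|^2$ and to derive one energy identity for each of the four summands, then add the four; this refines, for the convergence analysis, the single energy identity used in the stability proof above and follows the technique of \cite{XuShu12}. Besides the error equations \eqref{eq:error_eqn1}--\eqref{eq:error_eqn4}, I will use their time derivatives, which are legitimate because $W_h^k$, $\tilde M_h$, $M_h(0)$ do not depend on $t$ and, under the regularity hypotheses of Theorem \ref{thm:new}, all the projected errors and the time derivatives occurring below are well defined; in particular $\langle\widehat\epsilon_{qt},\mu n\rangle=\langle\widehat\epsilon_{pt},\chi n\rangle=0$ for all $(\mu,\chi)\in\tilde M_h\times M_h(0)$.

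Each identity is produced by the mechanism that gave \eqref{eq:energy1_1}: one picks test functions so that, after adding the (possibly time-differentiated) first three error equations, the unwanted volume products cancel in pairs, the remaining products $(\epsilon_a,(\epsilon_b)_x)$ become, after elementwise integration by parts, boundary terms of the form $\langle\epsilon_a\epsilon_b,n\rangle$ and $\langle\epsilon_a^2,n\rangle$, and the surviving products containing a $\delta$ reproduce one triple of the terms defining $S$. The choices are: for $\|\epsilon_u\|^2$, $(v,z,w)=(-\epsilon_p,\epsilon_q,\epsilon_u)$ in \eqref{eq:error_eqn1}--\eqref{eq:error_eqn3}; for $\|\epsilon_q\|^2$, $v=\epsilon_q$ in the time-differentiated \eqref{eq:error_eqn1}, $z=\epsilon_{ut}$ in \eqref{eq:error_eqn2}, $w=-\epsilon_p$ in \eqref{eq:error_eqn3}; for $\|\epsilon_p\|^2$, $z=\epsilon_p$ in the time-differentiated \eqref{eq:error_eqn2}, $w=-\epsilon_{qt}$ in \eqref{eq:error_eqn3}, $v=\epsilon_{ut}$ in the time-differentiated \eqref{eq:error_eqn1}; for $\|\epsilon_{ut}\|^2$, $(v,z,w)=(-\epsilon_{pt},\epsilon_{qt},\epsilon_{ut})$ in the time derivatives of \eqref{eq:error_eqn1}--\eqref{eq:error_eqn3}. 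In these four cases the cross terms $\pm(\epsilon_q,\epsilon_p)$, $\pm(\epsilon_p,\epsilon_{ut})$, $\pm(\epsilon_{ut},\epsilon_{qt})$ and $\pm(\epsilon_{qt},\epsilon_{pt})$ cancel, and the $\delta$-products left over are exactly the first, second, third and fourth consecutive triples in the definition of $S$.

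Adding the four identities gives $\tfrac{1}{2}\frac{d}{dt}\|\epsilon\|^2+S+(\text{sum of the four boundary contributions})=0$, so it remains to identify the sum of the boundary contributions with $\Psi$. I complete each block: the contribution from the $\|\epsilon_u\|^2$ identity differs from the first line of $\Psi$, namely $-\langle\widehat\epsilon_p-\epsilon_p,(\widehat\epsilon_u-\epsilon_u)n\rangle+\tfrac{1}{2}\langle(\widehat\epsilon_q-\epsilon_q)^2,n\rangle$, only by terms quadratic in the hatted errors alone, of the form $\langle\widehat\epsilon_p\widehat\epsilon_u,n\rangle$ and $\langle\widehat\epsilon_q^2,n\rangle$; similarly the contributions from the $\|\epsilon_q\|^2$, $\|\epsilon_p\|^2$ and $\|\epsilon_{ut}\|^2$ identities match the second, third and fourth lines of $\Psi$ up to hatted-only terms built from $\langle\widehat\epsilon_q\widehat\epsilon_{ut},n\rangle$, $\langle\widehat\epsilon_p^2,n\rangle$, $\langle\widehat\epsilon_{qt}\widehat\epsilon_p,n\rangle$, $\langle\widehat\epsilon_{ut}^2,n\rangle$, $\langle\widehat\epsilon_{pt}\widehat\epsilon_{ut},n\rangle$ and $\langle\widehat\epsilon_{qt}^2,n\rangle$. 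Now the transmission conditions in \eqref{eq:error_eqn4} and their time derivatives make $\widehat\epsilon_u$, $\widehat\epsilon_q$, $\widehat\epsilon_p$ — hence also $\widehat\epsilon_{ut}$, $\widehat\epsilon_{qt}$, $\widehat\epsilon_{pt}$ — single-valued at every interior node, so each such hatted-only term telescopes to a contribution at $x_0$ and $x_N$ only; using $\widehat\epsilon_u=\widehat\epsilon_{ut}=0$ at $x_0$ and $x_N$ and $\widehat\epsilon_q=\widehat\epsilon_{qt}=0$ at $x_N$, which follow from the boundary data and the assumption $F=0$, all of these collapse to $\tfrac{1}{2}\widehat\epsilon_q^2(x_0)+\tfrac{1}{2}\widehat\epsilon_p^2(x_0)+\widehat\epsilon_p(x_0)\widehat\epsilon_{qt}(x_0)+\tfrac{1}{2}\widehat\epsilon_{qt}^2(x_0)-\tfrac{1}{2}\widehat\epsilon_p^2(x_N)$, and grouping the three $x_0$-terms other than $\tfrac{1}{2}\widehat\epsilon_q^2(x_0)$ into $\tfrac{1}{2}(\widehat\epsilon_p+\widehat\epsilon_{qt})^2(x_0)$ reproduces the last line of $\Psi$; together with the four completed blocks this is exactly $\Psi$.

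The main obstacle is the boundary-term bookkeeping in this last step: one must track carefully the orientation $n(x_{i-1}^+)=-1$, $n(x_i^-)=+1$ both when integrating by parts and when telescoping, and the square $\tfrac{1}{2}(\widehat\epsilon_p+\widehat\epsilon_{qt})^2(x_0)$ appears only after the $x_0$-leftovers of all four identities have been collected, so it is invisible at the level of a single identity. The other delicate point is the mixed test-function choice $z=\epsilon_{ut}$, $w=-\epsilon_{qt}$, $v=\epsilon_{ut}$ in the second and third identities, which is forced by the requirement that the unwanted volume products cancel; one should check at the outset that with these choices the pairwise cancellations do occur and the remaining $\delta$-products reproduce the correct triple of $S$.
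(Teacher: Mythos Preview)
Your proposal is correct and follows essentially the same approach as the paper: the paper also differentiates the error equations in time, makes exactly the four test-function choices you list to produce identities \eqref{eq:error_energy1}--\eqref{eq:error_energy4}, and then adds them. The only cosmetic difference is that the paper records the boundary leftovers identity-by-identity (obtaining $\tfrac{1}{2}\widehat\epsilon_q^{\,2}(x_0)$, $\tfrac{1}{2}\widehat\epsilon_p^{\,2}(x_0)-\tfrac{1}{2}\widehat\epsilon_p^{\,2}(x_N)$, $\widehat\epsilon_{qt}\widehat\epsilon_p(x_0)$, and $\tfrac{1}{2}\widehat\epsilon_{qt}^{\,2}(x_0)$ from the four identities respectively) rather than first writing all hatted-only terms and then telescoping, but the computation and the final grouping into $\tfrac{1}{2}(\widehat\epsilon_p+\widehat\epsilon_{qt})^2(x_0)$ are the same.
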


\begin{proof}
Differentiating the error equations \eqref{eq:error_eqn1}--\eqref{eq:error_eqn3} with respect to $t$, we get
\begin{subequations}
\label{eq:error_eqn_dt}
\begin{alignat}{1}
\label{eq:error_eqn_dt1}
({\epsilon_q}_t,{v})+({\delta_q}_t, v) + ({\epsilon_u}_t,v_x) - \langle \widehat{\epsilon}_{ut},{v} n\rangle& =0,\\
\label{eq:error_eqn_dt2}
({\epsilon_p}_t,{z})+({\delta_p}_t, z) + ({\epsilon_q}_t,z_x) -\langle \widehat{\epsilon}_{qt},{z}n \rangle & =0,\\
\label{eq:error_eqn_dt3}
({\epsilon_u}_{tt},{w})+({\delta_u}_{tt}, w)-({\epsilon_p}_t,w_x)+ \langle \widehat{\epsilon}_{pt},{w} n\rangle &=0.
\end{alignat}
\end{subequations}
Next, we use \eqref{eq:error_eqn} and \eqref{eq:error_eqn_dt} to get four energy identities.

(i) Taking $w=\epsilon_u,  v=-\epsilon_p,$ and $z=\epsilon_q$ in \eqref{eq:error_eqn} and
adding the three equations together, we have
\begin{alignat*}{1}
0=&({\eps_u}_t,\epsilon_u)+(\delta_{ut},\epsilon_u)-(\epsilon_p, {\eps_u}_x) + \langle \widehat{\eps}_p, \eps_u n\rangle\\
&-(\eps_q, \eps_p)- (\delta_q, \eps_p)-(\eps_u, {\eps_p}_x) + \langle \widehat{\eps}_u, \eps_p n\rangle\\
&+(\eps_p, \eps_q)+(\delta_p, \eps_q)+ (\eps_q, {\eps_q}_x)-\langle \widehat{\eps}_q, \eps_q n\rangle.
\end{alignat*}
Using integration by parts, \eqref{eq:error_eqn4}, and the fact that
$$\widehat{\eps}_u|_{\partial\Omega}=\widehat{e}_u|_{\partial\Omega}=0, \qquad\widehat{\eps}_q|_{\partial\Omega_N}=\widehat{e}_q|_{\partial\Omega_N}=0,$$
 we get
\begin{alignat}{1}\label{eq:error_energy1}
\begin{split}
0=&\frac{1}{2}\frac{d}{dt}\|\eps_u\|^2 +({\delta_u}_t,\epsilon_u) -(\delta_q,\eps_p)+(\delta_p,\eps_q)\\
 &-\langle \widehat{\eps}_p-\eps_p, (\widehat{\eps}_u-\eps_u)n \rangle +\frac{1}{2}\langle (\widehat{\eps}_q-\eps_q)^2, n\rangle+\frac{1}{2}\widehat{\epsilon}_q^{\;2}(x_0).
\end{split}
\end{alignat}

(ii)  Similar to (i), taking $v=\eps_q$ in \eqref{eq:error_eqn_dt1}, $z={\eps_u}_t$ in \eqref{eq:error_eqn2}, and $w=-\eps_p$ in \eqref{eq:error_eqn3} and adding the three equations together, we get
\begin{alignat}{1}
\label{eq:error_energy2}
\begin{split}
0=& \frac{1}{2}\frac{d}{dt} \|\eps_q\|^2 + ({\delta_q}_t,\eps_q) +(\delta_p, {\eps_u}_t) -({\delta_u}_t, \eps_p)\\
 & +\langle \widehat{\eps}_q -\eps_q, (\widehat{\eps}_{ut}-\eps_{ut})n\rangle
   +\frac{1}{2}\langle (\widehat{\eps}_p -\eps_p)^2, n\rangle-\frac{1}{2}  \widehat{\epsilon}_p^{\;2}(x_N)+\frac{1}{2}  \widehat{\epsilon}_p^{\;2}(x_0).
\end{split}
\end{alignat}

(iii) 
Taking $v={\eps_u}_t$ in \eqref{eq:error_eqn_dt1},  $z=\eps_p$ in \eqref{eq:error_eqn_dt2}, and $w=-{\eps_{q}}_t$ in \eqref{eq:error_eqn3} and adding the equations together, we get
\begin{alignat}{1}
\label{eq:error_energy3}
\begin{split}
0=& \frac{1}{2}\frac{d}{dt} \|\eps_p\|^2 + ({\delta_p}_t,\eps_p) +({\delta_{q}}_t, {\eps_u}_t) -({\delta_u}_t, {\eps_{q}}_t)\\
 & +\langle \widehat{\eps}_{qt} -\eps_{qt}, (\widehat{\eps}_{p}-\eps_{p})n\rangle
   +\frac{1}{2}\langle (\widehat{\eps}_{ut} -\eps_{ut})^2, n\rangle+\widehat{\epsilon}_{qt}\widehat{\epsilon}_p (x_0).
\end{split}
\end{alignat}

(iv) Taking $v=-{\eps_p}_t, z= {\eps_q}_t$, and $w={\eps_u}_t$ in \eqref{eq:error_eqn_dt1}--\eqref{eq:error_eqn_dt3} and adding the equations together, we get
\begin{alignat}{1}
\label{eq:error_energy4}
\begin{split}
0=& \frac{1}{2}\frac{d}{dt} \|{\eps_u}_t\|^2 + ({\delta_u}_{tt},{\eps_u}_t) -({\delta_{q}}_t, {\eps_p}_t) +({\delta_p}_t, {\eps_{q}}_t)\\
 & -\langle \widehat{\eps}_{pt} -\eps_{pt}, (\widehat{\eps}_{ut}-\eps_{ut})n\rangle
   +\frac{1}{2}\langle (\widehat{\eps}_{qt} -\eps_{qt})^2, n\rangle+\frac{1}{2}\widehat{\epsilon}_{qt}^{\;2}(x_0).
\end{split}
\end{alignat}

The proof is completed by adding the four equations \eqref{eq:error_energy1}--\eqref{eq:error_energy4} together.
\end{proof}

\subsubsection{Proof of the $L^2$-error estimate}

Using Lemma \ref{lemma:energy1}, we first get the following result.
\begin{lemma}\label{lemma:inequality}
If the stabilization function  satisfies the condition \eqref{eq:tau_cond_new}, then we have
\begin{alignat*}{1}
\|\eps(t)\|^2 \le &  \|\eps(0)\|^2 +\Theta(0)+ \int_0^t\widehat{\epsilon}_p^{\;2}(x_N)\,dt+2\,|\int_0^t S\, dt| \quad \textrm{ for } 0\le t\le T,,
\end{alignat*}
 where
 $$\Theta=\langle  \tau_{qu}^+ - \tau_{pu}^+ \tau_{qu}^+,    (\widehat{\eps}_u-\eps_u)^2  \rangle_{\partial\mathcal{T}_h^+}
          +\langle 1, \tau_{qu}^-(\widehat{\eps}_u-\eps_u)^2+\tau_{qp}^-(\widehat{\eps}_p-\eps_p)^2\rangle_{\partial\mathcal{T}_h^-},
 $$
 and $S$ is the same as in Lemma \ref{lemma:energy1}.
\end{lemma}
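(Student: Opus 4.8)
The plan is to integrate the identity of Lemma \ref{lemma:energy1} in time and show that the boundary term $\int_0^t \Psi\,dt$ can be absorbed up to the stated quantities, by proving that $\int_0^t \Psi\,dt = -\tfrac12\Theta(t) + \tfrac12\Theta(0) + (\text{nonnegative terms}) - \tfrac12\int_0^t \widehat{\epsilon}_p^{\,2}(x_N)\,dt$ or something of that shape, so that after moving $\Theta(t)$ to the left and discarding the nonnegative pieces we are left with exactly the claimed inequality. Concretely, first I would integrate $\tfrac12\frac{d}{dt}\|\epsilon\|^2 + S + \Psi = 0$ from $0$ to $t$, giving
\[
\tfrac12\|\epsilon(t)\|^2 - \tfrac12\|\epsilon(0)\|^2 + \int_0^t S\,dt + \int_0^t \Psi\,dt = 0,
\]
and then rearrange to $\|\epsilon(t)\|^2 \le \|\epsilon(0)\|^2 + 2|\int_0^t S\,dt| + 2\int_0^t(-\Psi)\,dt$. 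So the whole game is to bound $2\int_0^t(-\Psi)\,dt$ by $\Theta(0) + \int_0^t \widehat{\epsilon}_p^{\,2}(x_N)\,dt$ plus possibly more of $S$-type terms.

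The key step is to analyze $\Psi$ pointwise on $\partial\mathcal T_h^+$ and $\partial\mathcal T_h^-$ using the relations \eqref{eq:error_traces}, exactly as was done for $\Phi$ in the proof of Theorem \ref{thm:energy2}. On $\partial\mathcal T_h^+$, substituting $\widehat{\epsilon}_p^{\,+}-\epsilon_p^+ = \tau_{pu}^+(\widehat{\epsilon}_u-\epsilon_u^+)n^+$, $\widehat{\epsilon}_q^{\,+}-\epsilon_q^+ = \tau_{qu}^+(\widehat{\epsilon}_u-\epsilon_u^+)n^+$ and their $t$-derivatives into the six interior-face terms of $\Psi$, each term becomes a multiple of $(\widehat{\epsilon}_u-\epsilon_u^+)^2$ or $(\widehat{\epsilon}_{ut}-\epsilon_{ut}^+)^2$, and summing gives coefficients built from $-\tau_{pu}^+ - \tfrac12(\tau_{qu}^+)^2$ and similar; under \eqref{eq:tau_cond_new} (which implies \eqref{eq:tau_cond}) these are sign-definite, and a portion of them will telescope in time against the $\tfrac12\Theta$ term. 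On $\partial\mathcal T_h^-$, using $\widehat{\epsilon}_q^{\,-}-\epsilon_q^- = \tau_{qu}^-(\widehat{\epsilon}_u-\epsilon_u^-)n^- + \tau_{qp}^-(\widehat{\epsilon}_p^{\,-}-\epsilon_p^-)n^-$ and noting the transmission conditions \eqref{eq:error_eqn4} kill cross-element contributions, one similarly gets quadratic forms in $(\widehat{\epsilon}_u-\epsilon_u^-)$, $(\widehat{\epsilon}_p^{\,-}-\epsilon_p^-)$ whose definiteness uses $\tau_{qu}^-\tau_{qp}^- = 1$ and $\tau_{qu}^->0$. The boundary nodes $x_0$ and $x_N$ need separate bookkeeping: the $+\tfrac12\widehat{\epsilon}_q^{\,2}(x_0)$, $+\tfrac12(\widehat{\epsilon}_p+\widehat{\epsilon}_{qt})^2(x_0)$ terms are nonnegative and get discarded, while the lone $-\tfrac12\widehat{\epsilon}_p^{\,2}(x_N)$ term is the only one with the "wrong" sign and is exactly what produces the $\int_0^t\widehat{\epsilon}_p^{\,2}(x_N)\,dt$ on the right-hand side (the $\widehat{\epsilon}_{qt}\widehat{\epsilon}_p(x_0)$ term from \eqref{eq:error_energy3} combines with $\tfrac12\widehat{\epsilon}_{qt}^{\,2}(x_0)+\tfrac12\widehat{\epsilon}_p^{\,2}(x_N)$-type leftovers — one must check the $x_0$ contributions assemble into the perfect square $\tfrac12(\widehat{\epsilon}_p+\widehat{\epsilon}_{qt})^2(x_0)$ written in the lemma, which they do).

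The main obstacle I anticipate is the careful algebraic bookkeeping that identifies which part of the sign-definite quadratic form coming from $\Psi$ equals the time-derivative of $\tfrac12\Theta$ and which part is a genuine nonnegative remainder to be thrown away — in other words, correctly splitting each coefficient like $-\tfrac12(\tau_{qu}^+)^2 - \tau_{pu}^+$ into a "$\Theta$-generating" piece $\tau_{qu}^+ - \tau_{pu}^+\tau_{qu}^+$ (note the $\Theta$ in the statement has this exact coefficient, hinting that the telescoping comes from a term like $\langle \tau_{qu}^+(\widehat{\epsilon}_q-\epsilon_q),(\widehat{\epsilon}_{ut}-\epsilon_{ut})n\rangle$ integrated by parts in $t$) and a nonnegative leftover requiring the stronger interval conditions $\tau_{qu}^+\in[0,1]$, $\tau_{pu}^+\in[-1-\sqrt{1-(\tau_{qu}^+)^2},-\tfrac12-\tfrac12(\tau_{qu}^+)^2]$. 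Matching the $-$ faces is analogous but uses $\tau_{qu}^-\tau_{qp}^-=1$ to make the relevant discriminant vanish. Once this decomposition is pinned down, the rest is routine: discard nonnegatives, bound $-\Theta(t)\le 0$ has the wrong sign so instead move $+\Theta(t)\ge 0$... actually $\Theta\ge0$ under the hypotheses, so dropping $\Theta(t)$ only helps, leaving $\|\epsilon(t)\|^2 \le \|\epsilon(0)\|^2 + \Theta(0) + \int_0^t\widehat{\epsilon}_p^{\,2}(x_N)\,dt + 2|\int_0^t S\,dt|$, which is the claim.
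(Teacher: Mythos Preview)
Your proposal is correct and follows essentially the same route as the paper: decompose $\Psi$ (via the trace relations \eqref{eq:error_traces}) into a nonnegative part $\Gamma$, the time derivative $\tfrac12\frac{d}{dt}\Theta$, and the sole bad boundary term $-\tfrac12\widehat{\epsilon}_p^{\,2}(x_N)$, then integrate in $t$ and discard $\Gamma\ge0$ and $\Theta(t)\ge0$. The paper carries out exactly the algebraic bookkeeping you anticipate, writing $\Psi^+=\Gamma_1+\tfrac12\frac{d}{dt}\Theta_1$ and (using $\tau_{qu}^-\tau_{qp}^-=1$) $\Psi^-=\Gamma_2+\tfrac12\frac{d}{dt}\Theta_2-\tfrac12\widehat{\epsilon}_p^{\,2}(x_N)$, and then verifies $\Gamma_1,\Gamma_2,\Theta_1,\Theta_2\ge0$ under \eqref{eq:tau_cond_new}.
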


\begin{proof}
Using the definition of $\widehat{\eps}_p^+$ and $\widehat{\eps}_q$ in \eqref{eq:error_traces}, for the $\Psi$ term  in Lemma \ref{lemma:energy1}, we have
$$\Psi= \Psi^+ +\Psi^-,$$
where
\begin{alignat*}{1}
\Psi^+=& - \langle \tau_{pu}^+, (\widehat{\eps}_u-\eps_u)^2\rangle_{\partial\mathcal{T}_h^+}
       - \frac{1}{2}\langle (\tau_{qu}^+)^2, (\widehat{\eps}_u-\eps_u)^2\rangle_{\partial\mathcal{T}_h^+}\\
    & +\langle \tau_{qu}^+, (\widehat{\eps}_u-\eps_u)(\widehat{\eps}_u-\eps_u)_t \rangle_{\partial\mathcal{T}_h^+}
       - \frac{1}{2}\langle (\tau_{pu}^+)^2, (\widehat{\eps}_u-\eps_u)^2\rangle_{\partial\mathcal{T}_h^+}\\
    & -\langle  \tau_{pu}^+ \tau_{qu}^+, (\widehat{\eps}_u-\eps_u)(\widehat{\eps}_u-\eps_u)_t \rangle_{\partial\mathcal{T}_h^+}
      -\frac{1}{2}\langle 1, (\widehat{\eps}_{ut}-\eps_{ut})^2 \rangle_{\partial\mathcal{T}_h^+}\\
    &-\langle \tau_{pu}^+, (\widehat{\eps}_{ut}-\eps_{ut})^2 \rangle_{\partial\mathcal{T}_h^+}
          -\frac{1}{2}\langle(\tau_{qu}^+)^2, (\widehat{\eps}_{ut}-\eps_{ut})^2\rangle_{\partial\mathcal{T}_h^+}\\
    & +\frac{1}{2}\widehat{\epsilon}_q^{\;2}(x_0)+\frac{1}{2}(\widehat{\epsilon}_p+\widehat{\epsilon}_{qt})^2(x_0)\\
\intertext{and}
\Psi^-=& -\langle \widehat{\eps}_p-\eps_p, \widehat{\eps}_u-\eps_u\rangle_{\partial\mathcal{T}_h^-}
       + \frac{1}{2}\langle 1,(\tau_{qu}^-(\widehat{\eps}_u-\eps_u)+\tau_{qp}^-(\widehat{\eps}_p-\eps_p))^2\rangle_{\partial\mathcal{T}_h^-}\\
    & +\langle \tau_{qu}^-(\widehat{\eps}_u-\eps_u)+\tau_{qp}^-(\widehat{\eps}_p-\eps_p), (\widehat{\eps}_u-\eps_u)_t \rangle_{\partial\mathcal{T}_h^-}
       + \frac{1}{2}\langle 1,(\widehat{\eps}_p-\eps_p)^2\rangle_{\partial\mathcal{T}_h^-}\\
    & +\langle  \tau_{qu}^-(\widehat{\eps}_u-\eps_u)_t + \tau_{qp}^-(\widehat{\eps}_p-\eps_p)_t , \widehat{\eps}_p-\eps_p\rangle_{\partial\mathcal{T}_h^-}
      +\frac{1}{2}\langle 1, (\widehat{\eps}_{ut}-\eps_{ut})^2\rangle_{\partial\mathcal{T}_h^-}\\
    &-\langle  \widehat{\eps}_{pt}-\eps_{pt}, \widehat{\eps}_{ut}-\eps_{ut}\rangle_{\partial\mathcal{T}_h^-}
      +\frac{1}{2}\langle1,(\tau_{qu}^-(\widehat{\eps}_{ut}-\eps_{ut})+\tau_{qp}^-( \widehat{\eps}_{pt}-\eps_{pt}))^2\rangle_{\partial\mathcal{T}_h^-}\\
    &-\frac{1}{2}\widehat{\epsilon}_p^{\;2}(x_N).
      \end{alignat*}
We can rewrite the term $\Psi^+ $ as
$$\Psi^+= \Gamma_1 + \frac{1}{2}\frac{d}{dt} \Theta_1,$$
 where
 \begin{alignat*}{1}
\Gamma_1 =&   \langle -\tau_{pu}^+ -\frac{1}{2} (\tau_{qu}^+)^2 -\frac{1}{2}(\tau_{pu}^+)^2, (\widehat{\eps}_u-\eps_u)^2  \rangle_{\partial\mathcal{T}_h^+} \\
              & + \langle -\frac{1}{2}-\tau_{pu}^+ -\frac{1}{2}(\tau_{qu}^+)^2, (\widehat{\eps}_{ut}-\eps_{ut})^2 \rangle_{\partial\mathcal{T}_h^+}
              +\frac{1}{2}\widehat{\epsilon}_q^{\;2}(x_0)+\frac{1}{2}(\widehat{\epsilon}_p+\widehat{\epsilon}_{qt})^2(x_0),\\
\Theta_1=  & \langle  \tau_{qu}^+ - \tau_{pu}^+ \tau_{qu}^+,    (\widehat{\eps}_u-\eps_u)^2  \rangle_{\partial\mathcal{T}_h^+} .
\end{alignat*}
Similarly, if we assume that $\tau_{qu}^-\tau_{qp}^-=1$, after some calculations we get
$$\Psi^-=  \Gamma_2+ \frac{1}{2}\frac{d}{dt} \Theta_2-\frac{1}{2}\widehat{\epsilon}_p^{\;2}(x_N),$$
where
\begin{alignat*}{1}
\Gamma_2=& \langle (\frac{1}{2}\tau_{qu}^-)^2, (\widehat{\eps}_u-\eps_u)^2\rangle_{\partial\mathcal{T}_h^-}
             +\langle \frac{1}{2}, \Big(\tau_{qp}^-(\widehat{\eps}_p-\eps_p)+(\widehat{\eps}_u-\eps_u)_t)\Big)^2\rangle_{\partial\mathcal{T}_h^-}  \\
           & +  \langle(\frac{1}{2}\tau_{qp}^-)^2, (\widehat{\eps}_{pt}-\eps_{pt})^2\rangle_{\partial\mathcal{T}_h^-}
             +  \langle \frac{1}{2}, \Big( (\widehat{\eps}_p-\eps_p)+  \tau_{qu}^-(\widehat{\eps}_u-\eps_u)_t \Big)^2\rangle_{\partial\mathcal{T}_h^-}
             ,\\
\Theta_2= &\langle 1, \tau_{qu}^-(\widehat{\eps}_u-\eps_u)^2+\tau_{qp}^-(\widehat{\eps}_p-\eps_p)^2\rangle_{\partial\mathcal{T}_h^-}.
\end{alignat*}
So from Lemma \ref{lemma:energy1} we get
\begin{equation}
\label{eq:error_sum}
\frac{1}{2}\frac{d}{dt}(\|\eps\|^2 +\Theta_1 +\Theta_2) +\Gamma_1+\Gamma_2
= \frac{1}{2}\widehat{\epsilon}_p^{\;2}(x_N)-S.
\end{equation}
Now we integrate the equation \eqref{eq:error_sum} with respect to $t$ and get
\begin{alignat*}{1}
&\frac{1}{2}\Big(\|\eps(t)\|^2+ \Theta_1(t)+\Theta_2(t)\Big)+\int_0^t (\Gamma_1+\Gamma_2)dt\\
 = & \frac{1}{2}\Big(\|\eps(0)\|^2 +\Theta_1(0)+\Theta_2(0)\Big)+\frac{1}{2}\int_0^t\widehat{\epsilon}_p^{\;2}(x_N)dt-\int_0^t S \,dt.
\end{alignat*}
It is easy to check that if $\tau_{qu}^\pm, \tau_{pu}^+$ and $\tau_{qp}^-$ satisfy the condition \eqref{eq:tau_cond_new},
we have $$\Theta_1\ge 0, \;\;\Theta_2\ge 0,\;\; \Gamma_1\ge 0,\;\; \Gamma_2\ge 0 \;\; \textrm{ for any } t\in [0, T].$$
Therefore,
\begin{alignat*}{1}
\|\eps(t)\|^2 \le &  \|\eps(0)\|^2 +\Theta(0)+ \int_0^t\widehat{\epsilon}_p^{\;2}(x_N)\,dt+2\,|\int_0^t S \,dt|,
\end{alignat*}
where $\Theta=\Theta_1+\Theta_2.$
\end{proof}

To prove Theorem \ref{thm:new}, we also need the following Lemma for error estimates of the initial approximations at $t=0$ (See Theorem 2.2 and Theorem 2.3 in \cite{ChenCockburnDongHDG16}).
\begin{lemma}\label{lemma:inital_approximation1}
If $\tau_{qu}^\pm, \tau_{pu}^+, \tau_{qp}^-$ satisfy the condition \eqref{eq:cond_tau}, then for $k>0$,
\begin{alignat*}{1}
&\|\eps_u(0)\|+\|\eps_q(0)\|+\|\eps_p(0)\|\le C h^{k+2},\\ 
&\|\widehat{e}_u(0)\|_{\Eh}+\|\widehat{e}_q(0)\|_{\Eh}+\|\widehat{e}_p(0)\|_{\Eh}\le Ch^{2k+1}.
\end{alignat*}
\end{lemma}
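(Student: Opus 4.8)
The plan is to recognize that the initial approximation $(u_h^0, q_h^0, p_h^0, \widehat{u}_h^0, \widehat{p}_h^0)$ is precisely the stationary HDG approximation to a third-order linear problem whose exact solution is the initial datum $u_0$, so that the two bounds asserted here are nothing but the superconvergence estimates of \cite{ChenCockburnDongHDG16} read off at $t=0$. Thus the proof is a reduction followed by a direct citation of Theorems 2.2 and 2.3 of that paper.

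First I would record that, since $F=0$ in the linear case, the defining equations of the initial approximation (those displayed just after \eqref{eq:Nmethod4}, with $\widehat{F}_h^0=0$) are term-by-term identical to the stationary HDG scheme for $v + v_{xxx} = g$ with $g = u_0 + (u_0)_{xxx}$, written as the first-order system $q - v_x = 0$, $p - q_x = 0$, $v + p_x = g$, equipped with the numerical traces \eqref{eq:Nmethod4} and the boundary conditions $\widehat{u}_h^0\in M_h(u_D)$, $\langle \widehat{q}_h^0,\mu n\rangle = \langle q_N,\mu n\rangle_{\partial\Omega_N}$. The only structural difference from the time-dependent scheme \eqref{eq:Nmethod} is that the term $({u_h}_t,w)$ is replaced by $(u_h^0,w)$, which is exactly the zeroth-order contribution of the stationary operator.

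Second I would identify the exact solution of this stationary problem as $v=u_0$: indeed $u_0 + (u_0)_{xxx} = g$ by the very choice of $g$, and by the compatibility of the initial and boundary data one has $u_0 = u_D$ on $\partial\Omega$ and $(u_0)_x = q_N$ on $\partial\Omega_N$ at $t=0$, so $u_0$ satisfies both the equation and the boundary conditions. Consequently $(v,v_x,v_{xx}) = (u_0,(u_0)_x,(u_0)_{xx}) = (u(0),q(0),p(0))$, and the quantities $\eps_u(0),\eps_q(0),\eps_p(0)$ and $\widehat{e}_u(0),\widehat{e}_q(0),\widehat{e}_p(0)$ coincide exactly with the projection-of-error and numerical-trace errors of the stationary HDG method for this problem. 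Since $u_0\in H^{k+1}(\calT_h)$, and condition \eqref{eq:cond_tau} together with $k>0$ are precisely the hypotheses of Theorems 2.2 and 2.3 of \cite{ChenCockburnDongHDG16}, those theorems yield $\|\eps_u(0)\| + \|\eps_q(0)\| + \|\eps_p(0)\| \le Ch^{k+2}$ and $\|\widehat{e}_u(0)\|_{\Eh} + \|\widehat{e}_q(0)\|_{\Eh} + \|\widehat{e}_p(0)\|_{\Eh} \le Ch^{2k+1}$, which is the claim.

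The only non-routine point, and hence the main obstacle, is the zeroth-order reaction term $v$ in $v + v_{xxx} = g$, which is absent from a pure third-order operator $v_{xxx}=g$. If the superconvergence analysis of \cite{ChenCockburnDongHDG16} is stated only for the latter, one must verify that this term does not degrade the rates $h^{k+2}$ and $h^{2k+1}$. This holds because, in the energy identity underlying the projection-error estimate and in the duality (Aubin--Nitsche type) argument underlying the trace estimate, the contribution of the $v$ term is a lower-order perturbation whose volume pairings are controlled through the orthogonality relations \eqref{eq:projq}--\eqref{eq:proju} of the projection $\varPi$; these force the extra terms to be of order at least $h^{k+2}$, which are absorbed without loss. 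If instead the cited theorems already cover equations of the form $\alpha v + v_{xxx} = g$, this step is immediate and the lemma becomes a verbatim application.
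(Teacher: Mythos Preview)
Your proposal is correct and takes essentially the same approach as the paper: the paper does not prove this lemma at all but simply states it with the parenthetical remark ``(See Theorem 2.2 and Theorem 2.3 in \cite{ChenCockburnDongHDG16}),'' and your argument is exactly the reduction that justifies this citation. Your additional care in flagging the zeroth-order reaction term $v$ in $v+v_{xxx}=g$ is well placed but goes beyond what the paper itself supplies.
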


In addition, let us get an estimate for $\eps_{ut}$ at $t=0$.
\begin{lemma}\label{lemma:inital_approximation2}
If $\tau_{qu}^\pm, \tau_{pu}^+, \tau_{qp}^-$ satisfy  the condition \eqref{eq:cond_tau}, then for $k>0$
\begin{alignat*}{1}
&\|\eps_{ut}(0)\| \le C h^{k+1}.
\end{alignat*}
\end{lemma}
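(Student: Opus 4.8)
The plan is to pin down $\epsilon_{ut}(0)$ from the error equation \eqref{eq:error_eqn3} evaluated at $t=0$ and control it by a short energy argument. Recall that at $t=0$ the quantities $u_h,q_h,p_h,\widehat u_h,\widehat p_h^{\,-}$ are the chosen initial approximation, and $u_{h,t}(0)\in W_h^k$ is then determined by \eqref{eq:Nmethod3} at $t=0$; since the projection $\varPi$ has $t$-independent defining equations it commutes with $\partial_t$, so the error equation \eqref{eq:error_eqn3} holds at $t=0$, namely
$$(\epsilon_{ut}(0),w)+(\delta_{ut}(0),w)-(\epsilon_p(0),w_x)+\langle\widehat\epsilon_p(0),w\,n\rangle=0\qquad\forall\,w\in W_h^k,$$
with $\delta_{ut}(0)=u_t(0)-\varPi u_t(0)$ and $\widehat\epsilon_p(0)=\widehat e_p(0)$ as noted after \eqref{eq:error_traces}.

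Choosing $w=\epsilon_{ut}(0)$ gives
$$\|\epsilon_{ut}(0)\|^2=-(\delta_{ut}(0),\epsilon_{ut}(0))+(\epsilon_p(0),\partial_x\epsilon_{ut}(0))-\langle\widehat\epsilon_p(0),\epsilon_{ut}(0)\,n\rangle,$$
and it remains to bound each term by $Ch^{k+1}\|\epsilon_{ut}(0)\|$. For the first term, $u_t(0)\in H^{k+1}(\calT_h)$ by the regularity hypothesis, so Lemma \ref{lemma:projapprox} yields $\|\delta_{ut}(0)\|\le Ch^{k+1}$. For the second, I would use the inverse inequality $\|\partial_x v\|_{I_i}\le Ch_i^{-1}\|v\|_{I_i}$ on each element together with the superconvergent bound $\|\epsilon_p(0)\|\le Ch^{k+2}$ of Lemma \ref{lemma:inital_approximation1}, so that $|(\epsilon_p(0),\partial_x\epsilon_{ut}(0))|\le Ch^{-1}\|\epsilon_p(0)\|\,\|\epsilon_{ut}(0)\|\le Ch^{k+1}\|\epsilon_{ut}(0)\|$.

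The boundary term is the main obstacle. At the interior nodes and at $x_N$ we have $\widehat\epsilon_p(0)=\widehat e_p(0)$ single valued with $|\widehat e_p(0)|\le\|\widehat e_p(0)\|_{\Eh}\le Ch^{2k+1}$ by Lemma \ref{lemma:inital_approximation1}, while at $x_0$ the Dirichlet condition forces $\widehat e_u(x_0)=0$, so \eqref{eq:error_traces} gives $\widehat\epsilon_p^{\,+}(x_0)=\epsilon_p^+(x_0)-\tau_{pu}^+\epsilon_u^+(x_0)\,n^+$, which the discrete trace inequality and $\|\epsilon_p(0)\|+\|\epsilon_u(0)\|\le Ch^{k+2}$ bound by $Ch^{k+3/2}$. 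Combining these node-by-node estimates with the trace/inverse inequality $\|\epsilon_{ut}(0)\|_{\partial\calT_h}\le Ch^{-1/2}\|\epsilon_{ut}(0)\|$ and Cauchy--Schwarz over $\Eh$, the boundary term is at most $C(h^{2k+1/2}+h^{k+1})\|\epsilon_{ut}(0)\|\le Ch^{k+1}\|\epsilon_{ut}(0)\|$ since $k\ge1$. Collecting the three bounds gives $\|\epsilon_{ut}(0)\|^2\le Ch^{k+1}\|\epsilon_{ut}(0)\|$, hence $\|\epsilon_{ut}(0)\|\le Ch^{k+1}$. The crux is this boundary term: it closes only because Lemma \ref{lemma:inital_approximation1} furnishes the superconvergent $O(h^{2k+1})$ accuracy of the initial numerical trace rather than merely the $O(h^{k+1})$ rate of the volume projection; the remaining manipulations are routine.
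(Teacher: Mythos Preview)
Your proof is correct and follows essentially the same route as the paper: test \eqref{eq:error_eqn3} at $t=0$ with $w=\epsilon_{ut}(0)$, then control the three resulting terms by Lemma~\ref{lemma:projapprox}, the inverse inequality combined with the superconvergent bound $\|\epsilon_p(0)\|\le Ch^{k+2}$ of Lemma~\ref{lemma:inital_approximation1}, and the trace inequality combined with the $O(h^{2k+1})$ bound on $\widehat e_p(0)$. Your separate treatment of $x_0$ is a harmless but unnecessary detour: since $\widehat\epsilon_p=\widehat e_p$ on all of $\partial\calT_h$ and Lemma~\ref{lemma:inital_approximation1} already gives $\|\widehat e_p(0)\|_{\Eh}\le Ch^{2k+1}$ over every node including $x_0$, the paper simply bounds the whole boundary term by $Ch^{-1}\|\widehat\epsilon_p(0)\|_{\Eh}^2$ without splitting off the left endpoint.
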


\begin{proof}
Taking $t=0$ and $w={\eps_u}_t(0)$ in the error equation \eqref{eq:error_eqn3}, we have
\begin{equation*}
(\epsilon_{ut}(0),{\eps_u}_t(0))+(\delta_{ut}(0), {\eps_u}_t(0))-(\epsilon_p(0),{{\eps_u}_t}_x(0)) + \langle \widehat{\epsilon}_p(0),{\eps_u}_t(0) n\rangle =0.
\end{equation*}
By Cauchy inequality, trace inequality and inverse inequality,
we get
$$\|{\eps_u}_t(0)\|^2\le C\|\delta_{ut}(0)\|^2+Ch^{-2}\|\eps_p(0)\|^2 +Ch^{-1}\|\widehat{\eps}_p(0)\|_{\Eh}^2.
$$
Then the conclusion  follows by  using Lemma \ref{lemma:projapprox} and Lemma \ref{lemma:inital_approximation1}.

\end{proof}

Now let us finish the proof of Theorem \ref{thm:new} by estimating the right hand side of the inequality in Lemma \ref{lemma:inequality} and using Lemma \ref{lemma:inital_approximation1} and Lemma \ref{lemma:inital_approximation2}.
\begin{proof}
We first estimate the term $\int_0^t\;\widehat{\epsilon}_p^{\;2}(x_N) dt$. Taking $\omega$ to be  $\omega_1:=\frac{x-x_0}{x_N-x_0}$ in \eqref{eq:error_eqn3},
 we get
$$\widehat{\epsilon}_p(x_N)=-(\epsilon_{ut}, \omega_1)-(\delta_{ut},\omega_1)+(\epsilon_p, \frac{1}{x_N-x_0})$$
by the fact that $\omega_1(x_0)=0$ and $\omega_1(x_N)=1$.
Using Cauchy inequality, we have
\begin{alignat*}{1}
|\widehat{\epsilon}_p(x_N)|
\le &\, |(\epsilon_{ut}, \omega_1)|+|(\delta_{ut},\omega_1)|+|(\epsilon_p, \frac{1}{x_N-x_0})| \\
\le &\, C (\|\epsilon_{ut}\|  + \|\delta_{ut}\| +\|\epsilon_p\| ).
\end{alignat*}
Then by the approximation property of the projection $\Pi$ in Lemma \ref{lemma:projapprox}, we obtain
\begin{alignat}{1}\label{eq:estimate_pN}
\int_0^t\widehat{\epsilon}_p^{\;2}(x_N)\,dt \le\; & Ch^{2k+2}+\int_0^t \|\epsilon \|^2 dt.
\end{alignat}
Next, we estimate the term $|\int_0^t  S \, dt|$. Let $$S=S_1+S_2,$$
where
 \begin{alignat*}{1}
S_1=&(\delta_{ut},\epsilon_u)-(\delta_q, \epsilon_p)+(\delta_p, \epsilon_q)
    +({\delta_q}_t, \epsilon_q)+(\delta_p, \epsilon_{ut})-(\delta_{ut},\epsilon_p)\\
   & +({\delta_p}_t,\epsilon_p) +({\delta_q}_t,\epsilon_{ut})
    +(\delta_{utt},\epsilon_{ut}),\\
S_2=&-({\delta_u}_t, {\epsilon_q}_t)-({\delta_q}_t, {\epsilon_p}_t)+({\delta_p}_t, {\epsilon_q}_t).
 \end{alignat*}
 Using Cauchy inequality and the approximation property of the projection $\Pi$ in Lemma \eqref{lemma:projapprox}, we get
\begin{alignat*}{1}
\int_0^t |S_1|dt \le Ch^{k+1}\int_0^t\|\eps \|dt.
\end{alignat*}
Integrating $S_2$ with respect to $t$, we have
\begin{alignat*}{1}
\int_0^t S_2 dt= & - ({\delta_u}_{t}, \eps_q)|_0^t + \int_0^t({\delta_u}_{tt},\eps_q)dt -({\delta_q}_t, {\epsilon_p})|_0^t+\int_0^t({\delta_q}_{tt}, {\epsilon_p}) dt\\
                & +({\delta_p}_t, {\epsilon_q})|_0^t -\int_0^t({\delta_p}_{tt}, {\epsilon_q})dt.
\end{alignat*}
By the approximation property of the projection $\Pi$ in Lemma \ref{lemma:projapprox},
\begin{alignat*}{1}
\biggl|\int_0^t S_2 dt\bigg| \le & Ch^{2k+2}+C\|\eps(0)\|^2+\frac{1}{4}\|\eps(t)\|^2+Ch^{k+1}\int_0^t \|\eps \| dt.
\end{alignat*}
So  we get
\begin{alignat}{1}\label{eq:estimate_S}
\begin{split}
\Big|\int_0^t S dt\Big|&\le \int_0^t|S_1|dt +|\int_0^t S_2\, dt|\\
&\le  Ch^{2k+2}+C\|\eps(0)\|^2+\frac{1}{4}\|\eps(t)\|^2+Ch^{k+1}\int_0^t \|\eps \| dt.
\end{split}
\end{alignat}
Applying \eqref{eq:estimate_pN} and \eqref{eq:estimate_S} to Lemma \ref{lemma:inequality}, we have
\begin{alignat*}{1}
\|\eps(t)\|^2
 \le & C \|\eps(0)\|^2 +C \Theta(0)+C h^{2k+2}+C\int_0^t\|\eps(t)\|^2 dt.
\end{alignat*}
Since
\[
\Theta(0)\le C (\|\widehat{\eps}_u(0)\|_{\Eh}^2+\|\widehat{\eps}_p(0)\|_{\Eh}^2) +Ch^{-1}(\|\eps_u(0)\|^2+\|\eps_p(0)\|^2)
\]
by Lemma \ref{lemma:inital_approximation1} and the trace inequality,  we have
\begin{alignat*}{1}
\|\eps(t)\|^2 \le & Ch^{2k+2}+C\int_0^t\|\eps(t)\|^2 dt
\end{alignat*}
using Lemma \ref{lemma:inital_approximation1} and Lemma \ref{lemma:inital_approximation2}.
Now we use Gr\"{o}nwall's inequality and get
$$\|\eps(t)\|^2\le C h^{2k+2},$$
where $C$ depends on $t$ but not on $h$. This completes the proof of Theorem \ref{thm:new}.
\end{proof}

\section{Numerical Results}
\label{sec:numericaltest}
In this section, we carry out several numerical experiments to study the accuracy and capability of our HDG method. In the first and the second numerical
experiments, we examine the orders of convergence of the method for linear and nonlinear third-order problems.
In the third and the fourth experiments, we apply the method to solve some well-known dispersive wave problems. For all the experiments, we use the following second-order midpoint rule \cite{BonaChenKarakashianXing13,ChenCockburnDongDG16} for time discretization.
Let $0=t_0<t_1<\cdots<t_J=T$ be a partition of the interval $[0,T]$ and $\Delta t_j=t_{j+1}-t_j$.
For $j=0, \cdots, J-1$ and $\omega \in \{u_h, q_h, p_h\}$,  let $\omega^{j+1}\in W_h^k$ be  defined as $$\omega^{j+1}=2\omega^{j,1}-\omega^j,$$ where $\omega^{j,1}$ is the solution of the equation
$$ \frac{\omega^{j,1}-\omega^j}{\frac{1}{2}\,\Delta t_j} +(\omega^{j,1})_{xxx}+F(\omega^{j,1})_x=0.$$
The components of the stabilization function, $(\tau_{qu}^+, \tau_{pu}^+, \tau_{qu}^-, \tau_{qp}^-)$ are taken to be $(0, -1, 1, 1)$ in all the following numerical tests.

{\it Numerical experiment 1:} In this test, we use the HDG method to solve the time-dependent third-order linear problem
$$ u_t + u_{xxx} =f,$$ where $f$ is chosen so that the exact solution is $u(x,t)=\sin(x+t)$ on the domain $(x,t)\in [0, 1]\times [0, 0.1]$. The initial condition is $u_0=\sin(x)$ and the boundary conditions are $u(0,t)=\sin(t), u(1,t)=\sin(1+t)$ and $u_x(1,t)=\cos(1+t)$. 
We take {$h={2^{-n}}$ for $n=1,\cdots, 5$}. The step size for time discretization is $\Delta t=0.1*h^2$ for $k=0, 1$, and $\Delta t=0.1*h^3$ for $k=2, 3$ so that the temporal errors are very small. We compute the orders of convergence of $u_h, q_h, p_h$ at the final time $T=0.1$, and the orders we observe in the numerical experiments  are listed in Table \ref{tab:linear_order}.

Our numerical results indicate that the orders of convergence of
$(e_u,e_q,e_p)$ are optimal as predicted by the error estimate in Theorem \ref{thm:actual_error} for any $k>0$. For $k=0$, although our error analysis is inclusive,  we observe that the method converges optimally in the numerical experiment.

\begin{table}
\small
\begin{center}
\renewcommand{\arraystretch}{1.3}
\begin{tabular}{ |c|c | c | c | c | c | c | }
\hline
$k$ & ${e_u}$ & Order &${e_q}$ & Order & {$e_p$} & Order \\ \hline
\multirow{6}{*}{0}
&1.27e-01 & -    & 1.07e-01 & -    & 1.94e-01 & - \\
&6.87e-02 & 0.89 & 6.26e-02 & 0.77 & 1.13e-01 & 0.78 \\
&3.83e-02 & 0.84 & 3.52e-02 & 0.83 & 6.10e-02 & 0.89 \\
&2.08e-02 & 0.88 & 1.94e-02 & 0.86 & 3.31e-02 & 0.88 \\
&1.07e-02 & 0.96 & 1.03e-02 & 0.92 & 1.85e-02 & 0.84 \\
 \hline
\multirow{6}{*}{1}
&1.13e-02 & -    & 1.22e-02 & -    & 6.83e-03 & - \\
&3.28e-03 & 1.79 & 3.08e-03 & 1.99 & 1.90e-03 & 1.85 \\
&8.62e-04 & 1.93 & 7.69e-04 & 2.00 & 4.87e-04 & 1.97 \\
&2.17e-04 & 1.99 & 1.92e-04 & 2.00 & 1.22e-04 & 1.99 \\
&5.44e-05 & 2.00 & 4.80e-05 & 2.00 & 3.06e-05 & 2.00 \\
\hline
\multirow{6}{*}{2}
&3.66e-04 & -    & 3.27e-04 & -    & 7.41e-04 & - \\
&4.59e-05 & 2.99 & 4.33e-05 & 2.92 & 6.99e-05 & 3.41 \\
&5.71e-06 & 3.01 & 5.50e-06 & 2.98 & 1.12e-05 & 2.64 \\
&7.10e-07 & 3.01 & 6.94e-07 & 2.99 & 1.49e-06 & 2.91 \\
&8.86e-08 & 3.00 & 8.73e-08 & 2.99 & 1.90e-07 & 2.97 \\
\hline
\multirow{6}{*}{3}
&1.97e-05 & -    & 5.43e-05 & -    & 7.32e-04 & - \\
&1.05e-06 & 4.23 & 2.24e-06 & 4.60 & 8.53e-05 & 3.10 \\
&6.50e-08 & 4.01 & 7.77e-08 & 4.85 & 4.19e-06 & 4.35 \\
&4.07e-09 & 4.00 & 3.88e-09 & 4.32 & 1.86e-07 & 4.49 \\
&2.55e-10 & 4.00 & 2.32e-10 & 4.06 & 5.68e-09 & 5.03 \\
 \hline
\end{tabular}\end{center}
\vskip.5truecm
\caption{The error $(e_u, e_q, e_p)$ and their convergence orders for the linear problem in the numerical experiment 1.}
\label{tab:linear_order}
\end{table}

{\it Numerical experiment 2:} Now we use the HDG method to solve the nonlinear third-order equation
$$ u_t + u_{xxx}+(3u^2)_x =f.
$$
 The function $f$, the initial condition and the boundary conditions are chosen so that the exact solution is $u(x,t)=\sin(2x+t)$ in the domain $(x,t)\in [0, \pi]\times [0, 0.1]$.
Here, we take the stabilization function $\tau_F=3$, given that $F(u)=3u^2$ and $\frac{1}{2} |F'(u)|=3|u|\le 3$ for the solution $u$. The mesh size for the HDG method is {$h={2^{-n}}$ for $n=3,\cdots, 7$}. The step size for time discretization is $\Delta t=0.1*h^2$ for $k=0, 1$ and $\Delta t=0.1*h^3$ for $k=2, 3$ so that the temporal errors are much smaller than the spatial errors.  The orders of convergence of $u_h, q_h, p_h$ at the final time $T=0.1$    are displayed in Table \ref{tab:nonlinear_order}.
Our numerical results show that the orders of convergence of
$(e_u,e_q,e_p)$ are also optimal for any $k\ge 0$ for the nonlinear problem.

\begin{table}
\small
\begin{center}
\renewcommand{\arraystretch}{1.3}
\begin{tabular}{ |c|c | c | c | c | c | c | }
\hline
$k$ & ${e_u}$ & Order &${e_q}$ & Order & {$e_p$} & Order \\ \hline
\multirow{6}{*}{0}
&6.63e-01 & -       & 1.34e-00 & -       & 2.63e-00 & - \\
&4.08e-01 & 0.70 & 8.58e-01 & 0.64 & 1.79e-00 & 0.56 \\
&2.37e-01 & 0.78 & 5.17e-01 & 0.73 & 1.16e-00 & 0.64 \\
&1.32e-01 & 0.84 & 2.94e-01 & 0.82 & 6.78e-01 & 0.76 \\
&7.11e-02 & 0.90 & 1.59e-01 & 0.89 & 3.71e-01 & 0.87 \\
 \hline
\multirow{6}{*}{1}
&5.35e-02 & -    & 9.60e-02 & -    & 2.31e-01 & - \\
&1.29e-02 & 2.05 & 2.36e-02 & 2.03 & 5.29e-02 & 2.12 \\
&3.18e-03 & 2.02 & 5.86e-03 & 2.01 & 1.28e-02 & 2.05 \\
&7.92e-04 & 2.01 & 1.47e-03 & 2.00 & 3.17e-03 & 2.01 \\
&1.98e-04 & 2.00& 3.67e-04 & 2.00 & 7.92e-04 & 2.00 \\
\hline
\multirow{6}{*}{2}
&3.31e-03 & -       & 5.81e-03 & -    & 1.25e-02 & - \\
&4.01e-04 & 3.05 & 7.32e-04 & 2.99 & 1.61e-03 & 2.96 \\
&4.97e-05 & 3.01 & 9.20e-05 & 2.99 & 1.99e-04 & 3.01 \\
&6.20e-06 & 3.00 & 1.15e-05 & 3.00 & 2.48e-05 & 3.00 \\
&7.74e-07 & 3.00 & 1.44e-06 & 3.00 & 3.10e-06 & 3.00 \\
\hline
\multirow{6}{*}{3}
&1.54e-04 & -    & 2.81e-04 & -    & 6.52e-04 & -   \\
&9.57e-06 & 4.01 & 1.77e-05 & 3.99 & 3.82e-05 & 4.09 \\
&5.97e-07 & 4.00 & 1.17e-06 & 3.99 & 2.39e-06 &4.00 \\
&3.73e-08 & 4.00 & 6.97e-08 & 4.00 & 1.49e-07 & 4.00 \\
&2.33e-09 & 4.00 & 4.36e-09 & 4.00 & 1.03e-08 & 3.86 \\
 \hline
\end{tabular}\end{center}
\vskip.5truecm
\caption{The error $(e_u, e_q, e_p)$ and their convergence orders for the nonlinear problem in the numerical experiment 2.}
\label{tab:nonlinear_order}
\end{table}

In the previous two tests, we have observed optimal convergence rates of the HDG method for both linear and nonlinear third-order problems. In the next two
tests, we apply the method to solve the KdV equation
\begin{equation}\label{eq:kdv}
u_t+ u_{xxx}+ (3u^2)_x=0.
\end{equation}
{\it Numerical experiment 3:} In this test, we consider the KdV equation \eqref{eq:kdv} in the domain $(x,t)\in [-10,0]\times [0,2]$ with the initial condition $u_0=2\sech^2(x-4)$ and the boundary conditions $u(-10, t)=2\sech^2(-10-4t+4), \,u(0,t)=2\sech^2(-4t+4), \,u_x(0, t)= -4\sech^2(-4t+4)\tanh(-4t+4)$. The exact solution to this initial-boundary value problem is the classical {\it solitary-wave solution} \cite{BonaChenKarakashianXing13,SamiiPandaMichoskiDawson16}
$$u(x,t)=2\sech^2(x-4t+4). $$

\begin{figure}[th]
\centering
   \begin{subfigure}[b]{\textwidth}
        \includegraphics[width=.48\textwidth]{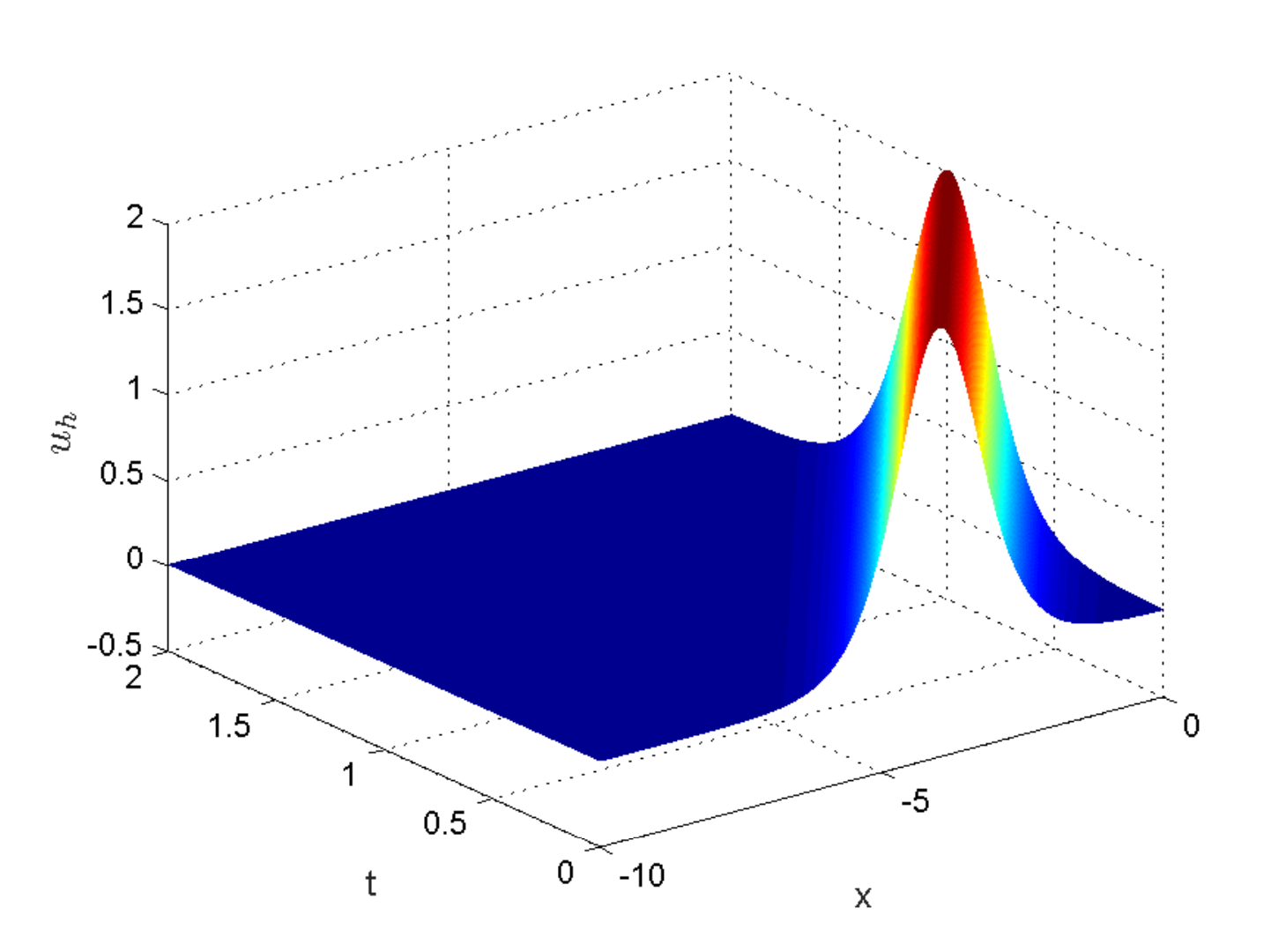}
        \includegraphics[width=.48\textwidth]{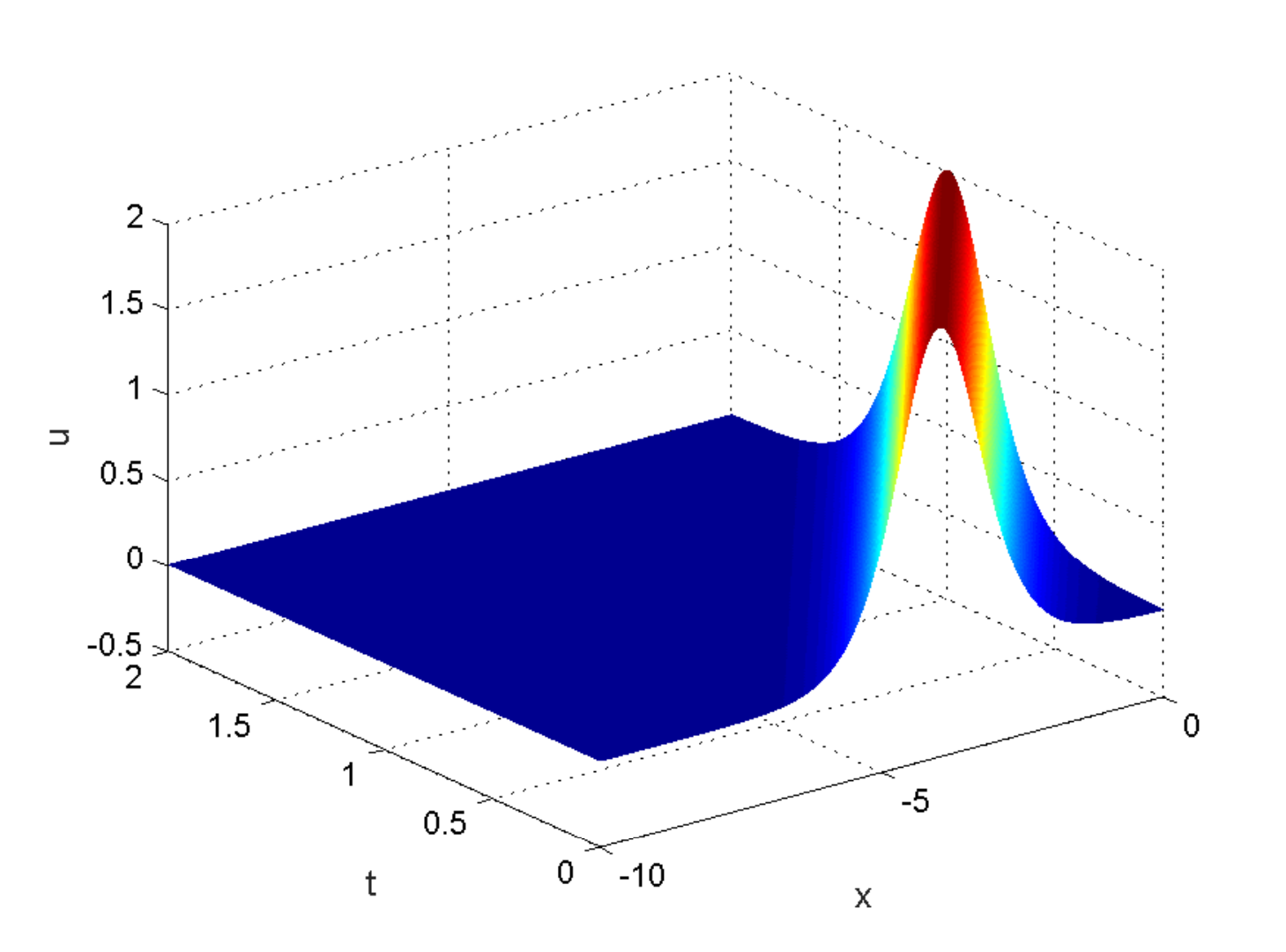}
        \caption{}
   \end{subfigure}\\
   \begin{subfigure}[b]{\textwidth}
        \includegraphics[width=.48\textwidth]{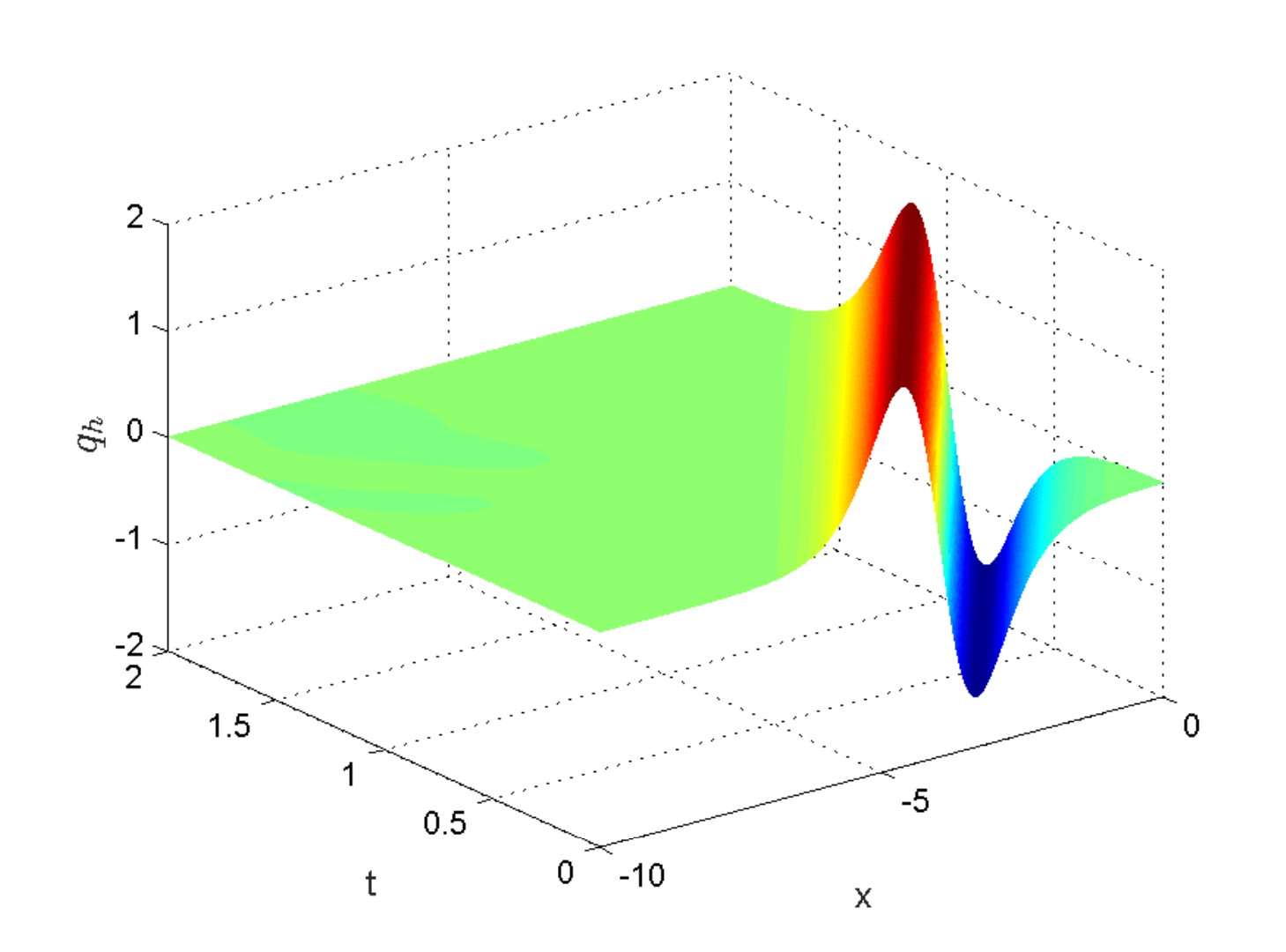}
        \includegraphics[width=.48\textwidth]{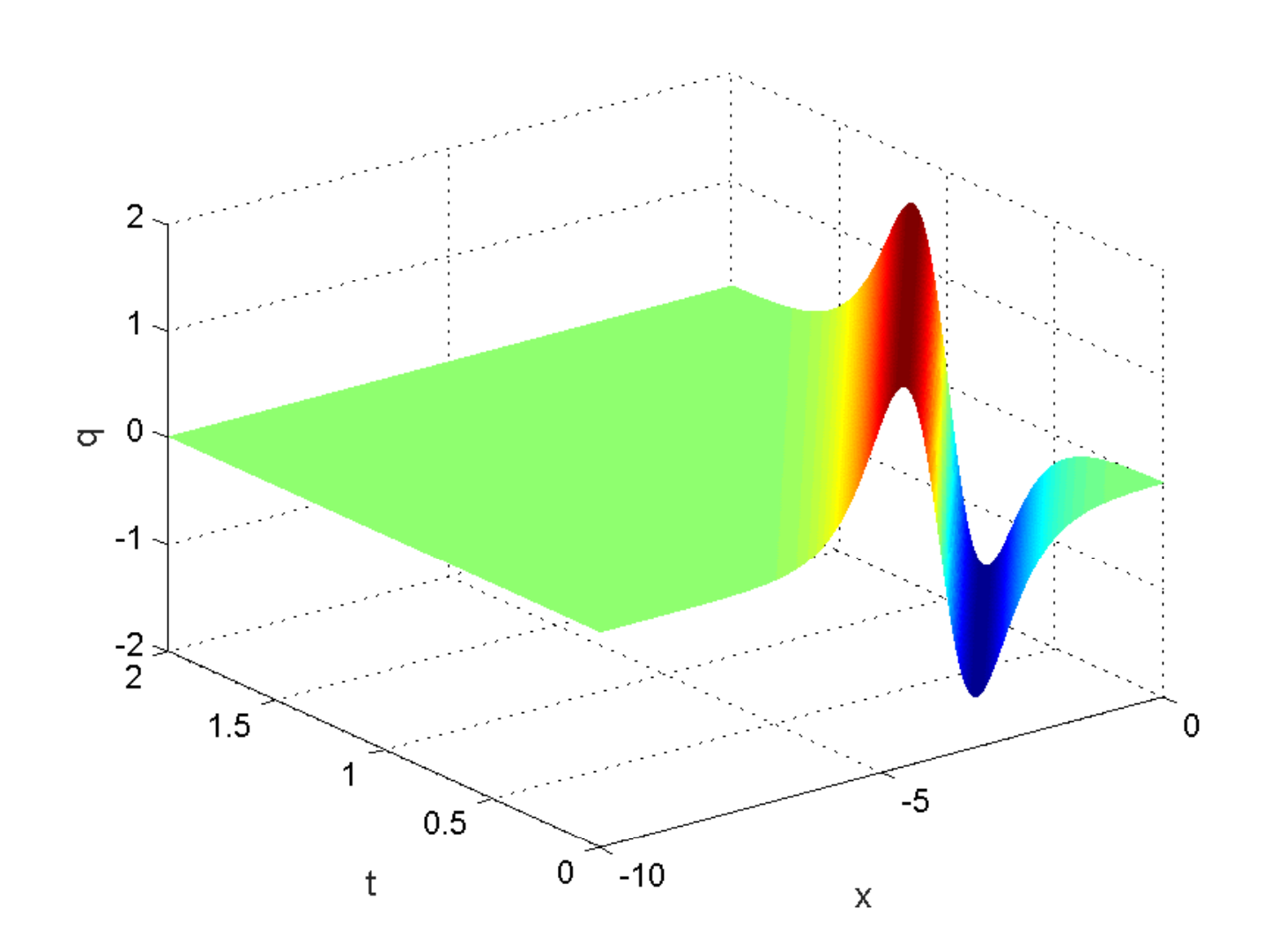}
        \caption{}
   \end{subfigure}\\
   \begin{subfigure}[b]{\textwidth}
        \includegraphics[width=.48\textwidth]{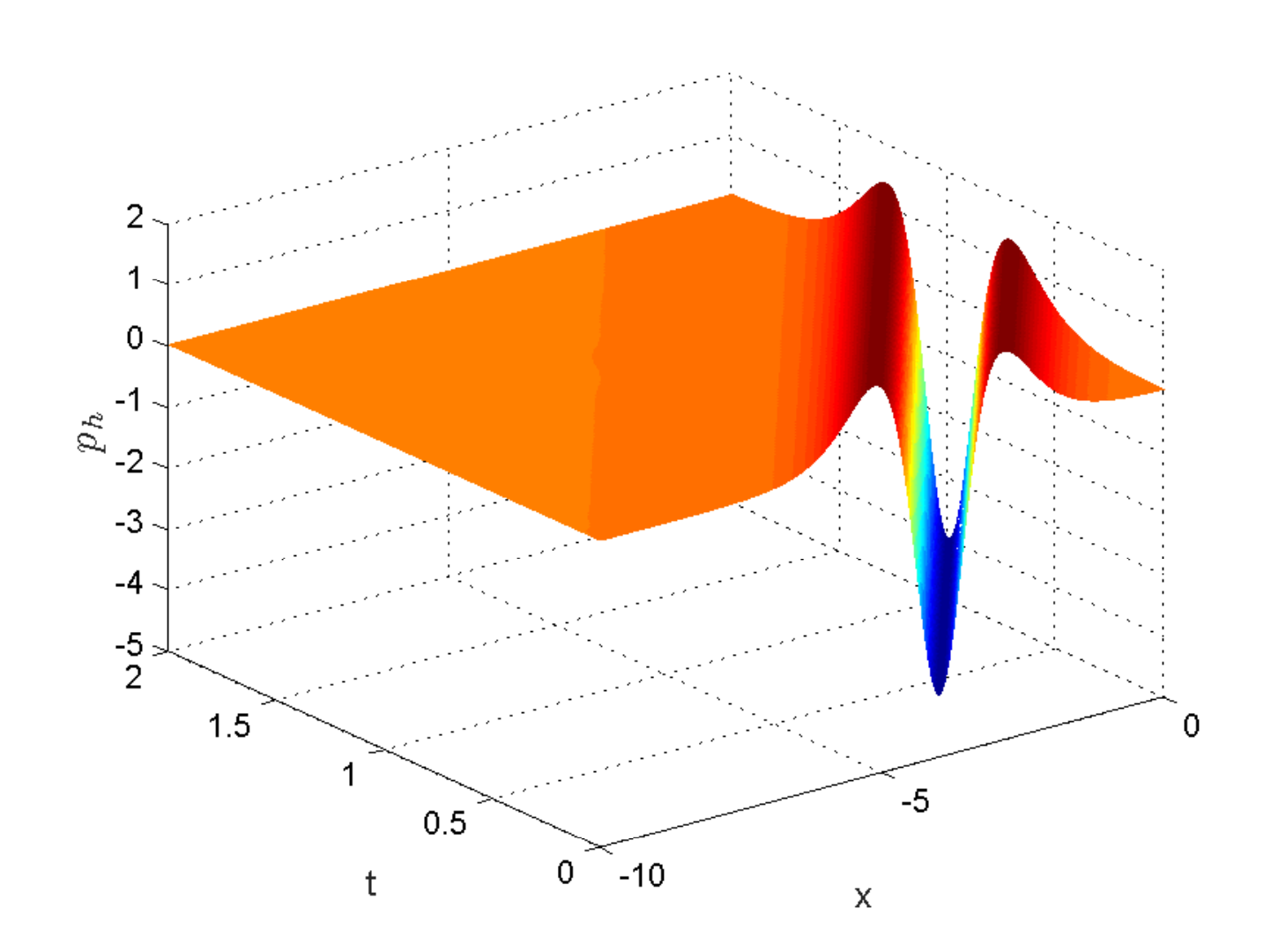}
        \includegraphics[width=.48\textwidth]{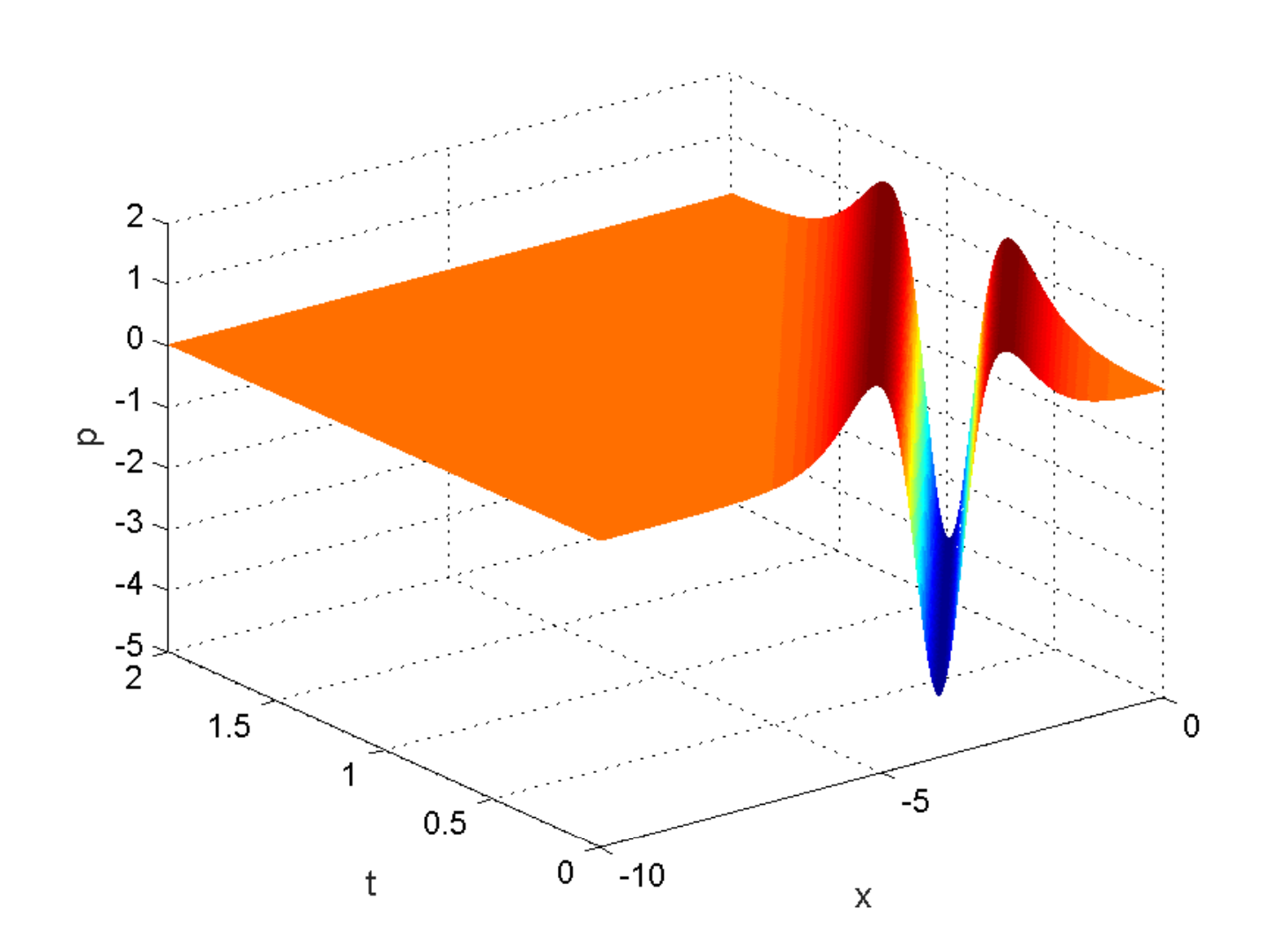}\\
        \caption{}
   \end{subfigure}
   \caption{Space-time graphs of one soliton in the domain $(x, t)\in [-10, 0]\times [0, 2]$. Evolution of the HDG approximate
solution (left) and the exact solution (right) of (A): $u$, (B): $q$, and (C): $p$.}\label{fig:1wave}
\end{figure}

In the computation, we use 100 elements, piecewise cubic polynomials, and time-step size $\Delta t=10^{-3}$, and take
$\tau_F=(F'(\widehat{u}))^2+\frac{1}{4}$ so that $\tau_F > \frac{1}{2}|F'(\widehat{u}_h)|$.
The space-time graphs of the computed solution $(u_h, q_h, p_h)$ as well as the exact solutions $(u, q, p)$ at the final time $T=2$ are displayed in Figure \ref{fig:1wave}. We observe a good match between the approximate solutions and the exact solutions.

{\it Numerical experiment 4:} In this test, we simulate the interaction of two solitary waves with different propagation speeds using our HDG method.
We consider the KdV equation \eqref{eq:kdv} in the domain $(x,t)\in [-20,0]\times [0,2]$ with the initial condition $$u_0(x)= 5 \frac{4.5 \csch^2[1.5(x+14.5)]+2\sech^2(x+12)}{\{3\coth[1.5(x+14.5)]-2\tanh(x+12)\}^2}$$
and boundary data $u(-20,t), u(0,t), u_x(0,t)$, which admits the solution (see \cite{SamiiPandaMichoskiDawson16})
$$u(x,t)=5\frac{4.5\csch^2[1.5(x-9t+14.5)]+2\sech^2(x-4t+12)}{\{3\coth[1.5(x-9t+14.5)]-2tanh(x-4t+12)\}^2}.$$

\begin{figure}[th]
\centering
   \begin{subfigure}[b]{\textwidth}
        \includegraphics[width=.48\textwidth]{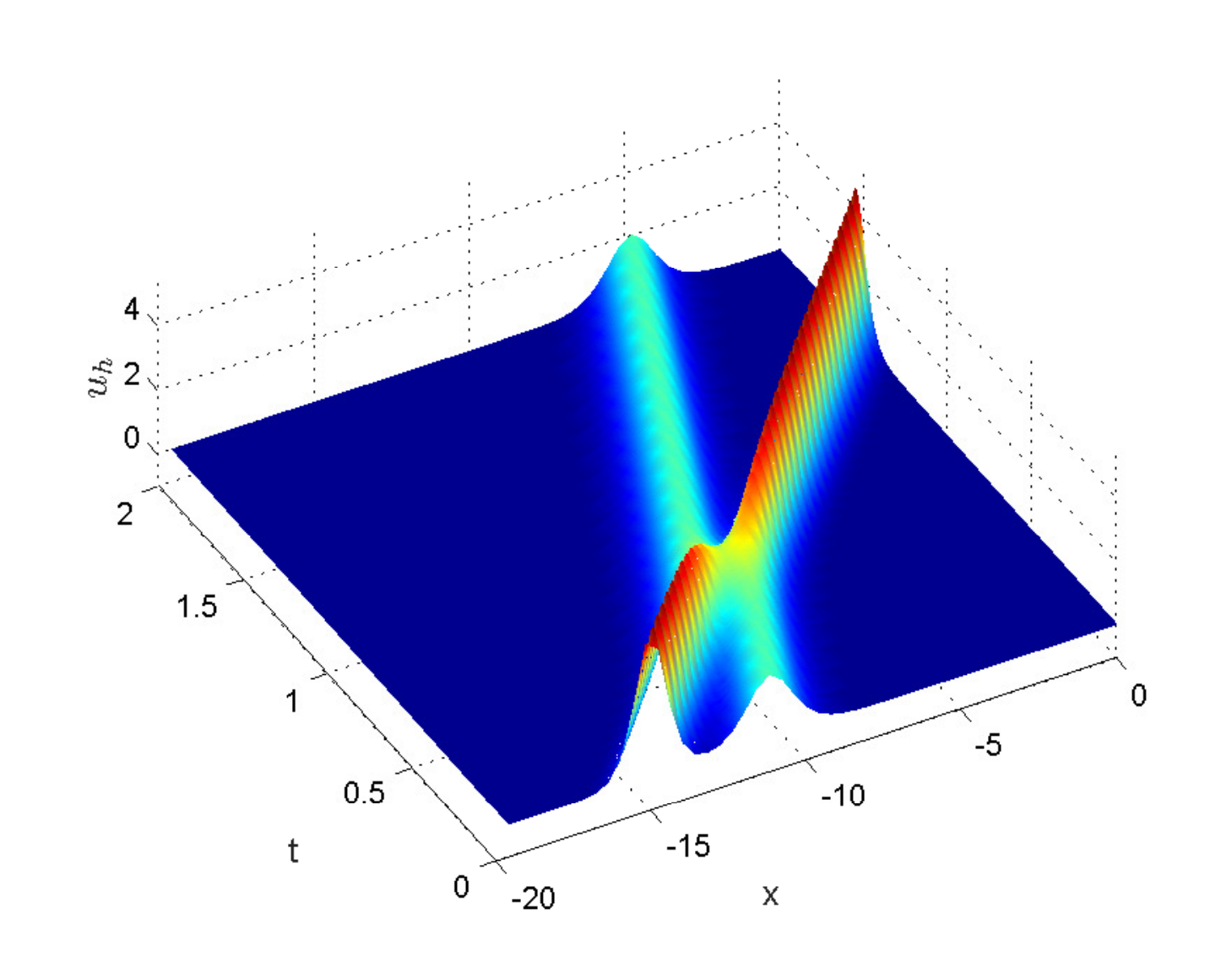}
        \includegraphics[width=.48\textwidth]{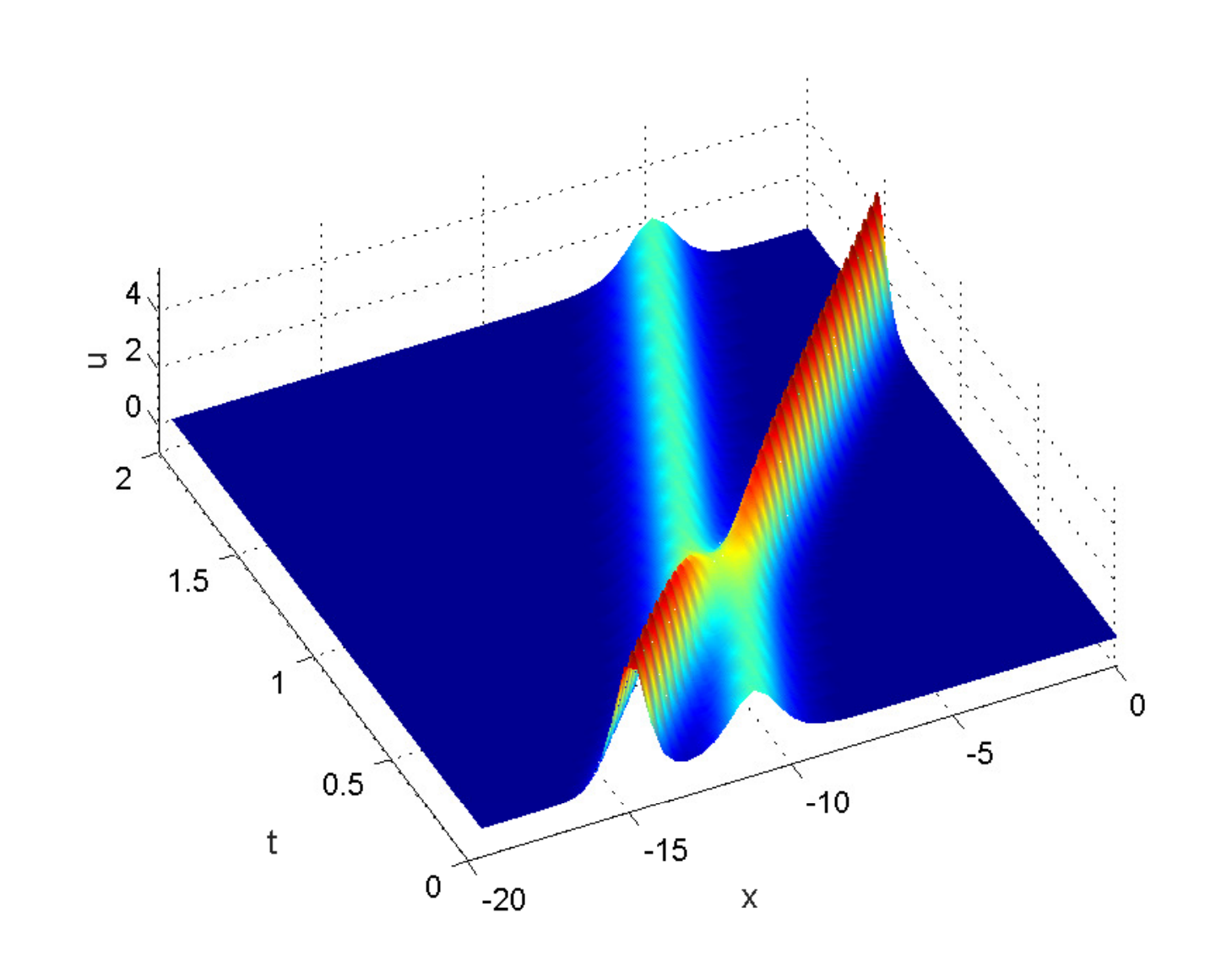}
        \caption{}
   \end{subfigure}\\
   \begin{subfigure}[b]{\textwidth}
        \includegraphics[width=.48\textwidth]{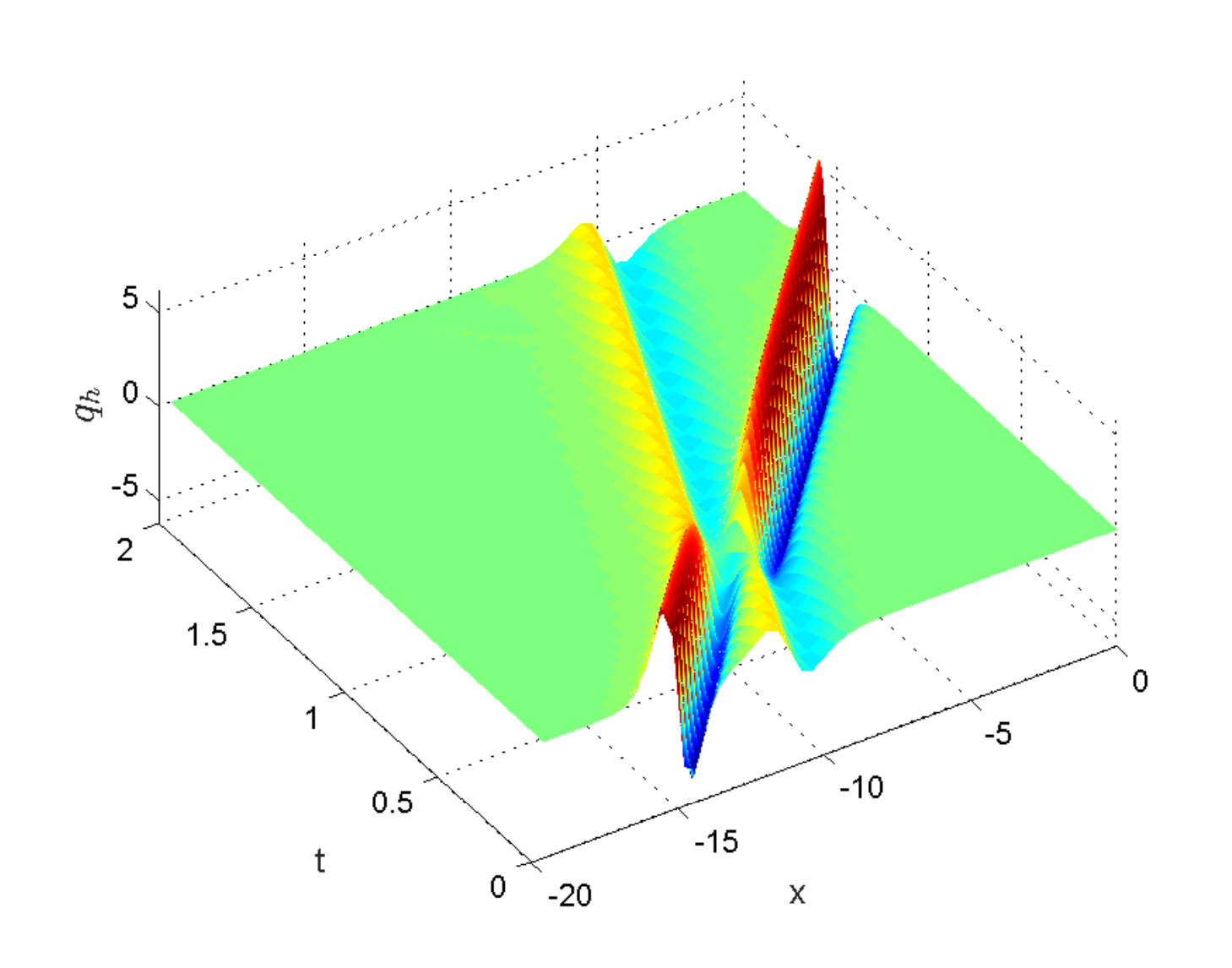}
        \includegraphics[width=.48\textwidth]{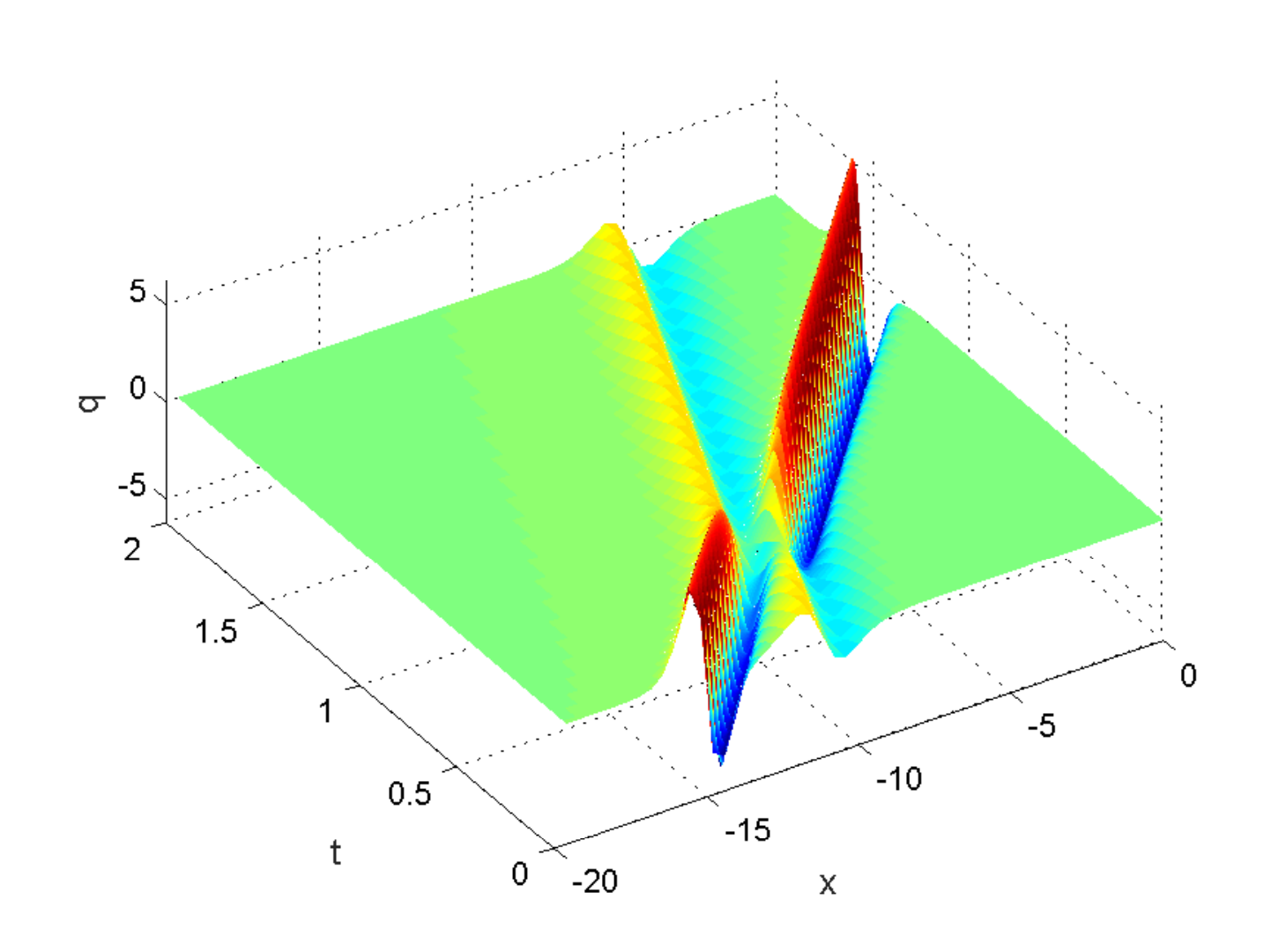}
        \caption{}
   \end{subfigure}\\
   \begin{subfigure}[b]{\textwidth}
        \includegraphics[width=.48\textwidth]{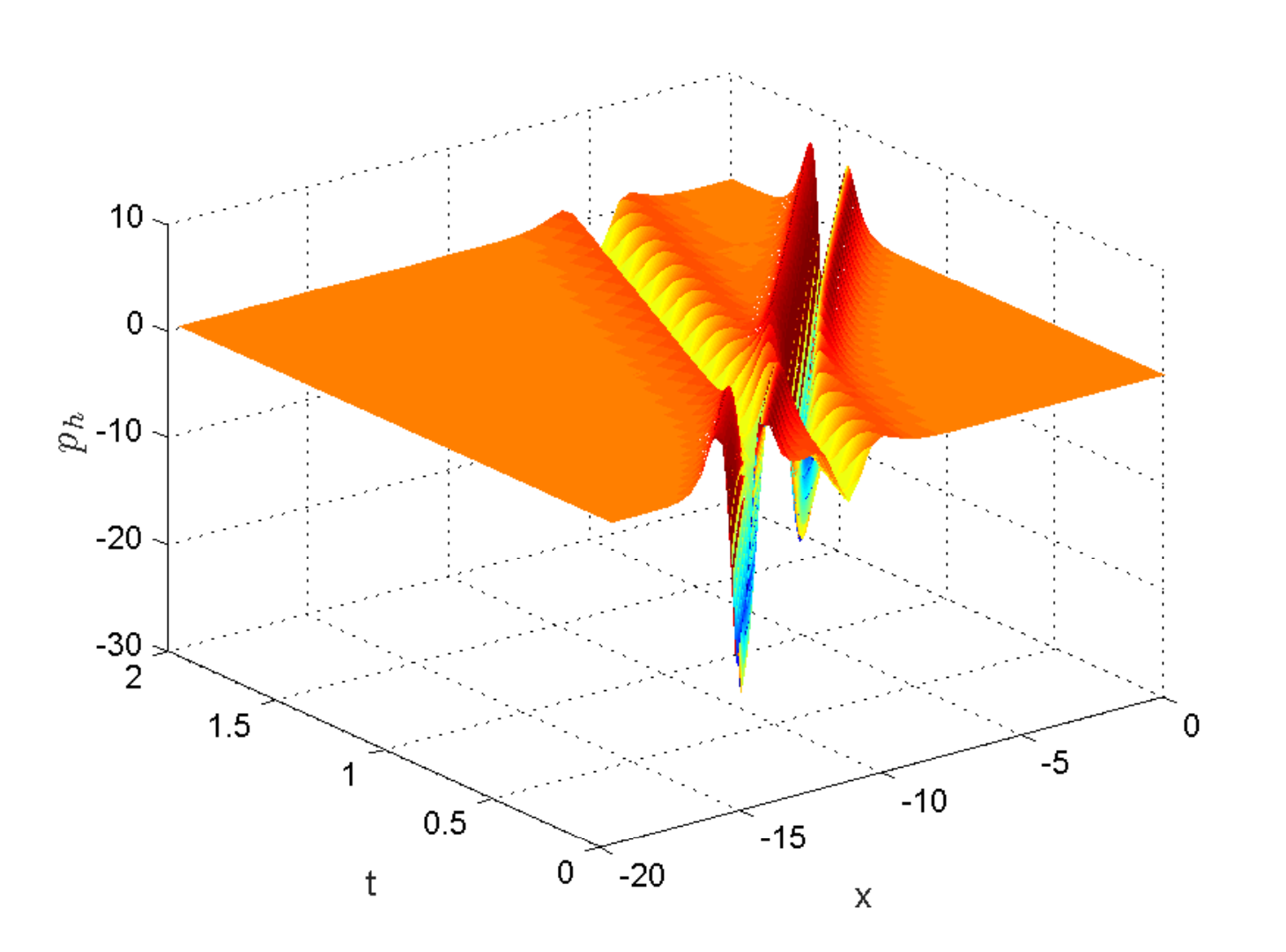}
        \includegraphics[width=.48\textwidth]{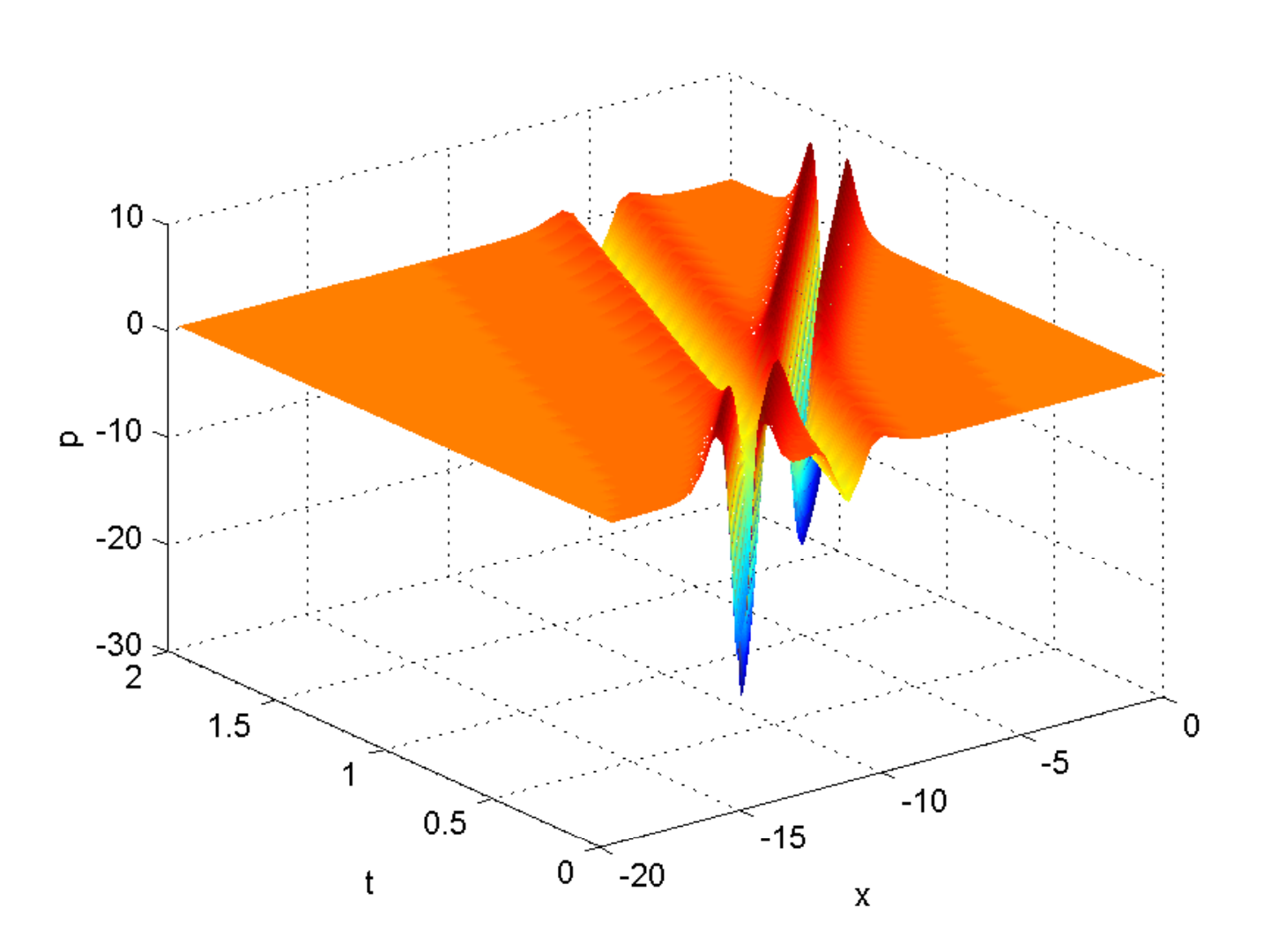}\\
        \caption{}
   \end{subfigure}
   \caption{Space-time graphs of the interaction of two solitary waves in the domain $(x, t)\in [-20, 0]\times [0, 2]$. Evolution of the HDG approximate
solution (left) and the exact solution (right) of (A): $u$, (B): $q$, and (C): $p$.}\label{fig:2wave}
\end{figure}

In our computation, we use 50 elements, piecewise cubic polynomials, and the time-step size $\Delta t=10^{-4}$.
The stabilization function $\tau_F$ is taken in the same way as in the previous test.
The space-time graphs of the HDG approximate solutions and the exact solutions are displayed in Figure \ref{fig:2wave}.
From the side-by-side comparison, we see that the HDG solutions are good approximations to the exact solutions. They show that the two waves are moving toward the same direction. The faster soliton catches up with the slower one and they overlap around $t=0.5$. Afterwards, the faster soliton
continues to propagate and the slower one falls behind.

\section{Concluding remarks}
\label{sec:conclude}
In this paper, we develop a new HDG method for time-dependent third-order equations in one space dimension based on the characterization of the exact solution as the solutions to local problems that are ``glued" together by transmission conditions.
We find conditions on the stabilization function under which the method is $L^2$ stable for KdV type equations. We also obtain optimal error estimates for the linear third-order equation. 
Numerical results from computation verify the theoretical error analysis and show that the method is able to accurately simulate solitary wave solutions of the KdV equation.
Our future work is to develop and analyze HDG methods for fifth-order KdV equations and third-order equations in multiple dimensions and complex systems.
%

\appendix
\section{Implementation}
To implement the HDG method \eqref{eq:Nmethod}, we use an implicit scheme for the discretization of the time derivative. One may use high order BDF or an implicit Runge-Kutta method for time discretization. Here, for simplicity we consider the backward Euler method with time-step $\Delta t$. At time-level $t_j$, inserting the definition of the numerical traces \eqref{eq:Nmethod4} into \eqref{eq:Nmethod1}--\eqref{eq:Nmethod4}, we obtain the equations  
\begin{alignat*}{2}
({q}_h,{v}) + (u_h,v_x)  - \langle \widehat{u}_h,{v}n \rangle   &= 0,\\
({p}_h,{z})  + (q_h,z_x)  - \langle {q}_h+ \tau_{qu} (\widehat{u}_h-u_h)n, z n \rangle  -\langle \tau_{qp} (\widehat{p}_h^{\,-}-p_h), z\rangle_{\partial {\mathcal T}_h^-} & =0,\\
\frac{1}{\Delta t}({u_h},{w}) -(p_h+F(u_h),w_x)  + \langle p_h+\tau_{pu}(\widehat{u}_h-u_h)n, w n\rangle_{\partial {\mathcal T}_h^+}\qquad\qquad&\\
+\langle \widehat{p}_h^{\,-}, {w}n \rangle_{\partial {\mathcal T}_h^-}
+\langle F(\widehat{u}_h) -\tau_{F}(\widehat{u}_h, u_h)(\widehat{u}_h -u_h)n, w n\rangle  = (f,{w}) +\frac{1}{\Delta t}(&u_h^{j-1}, w),\\
\end{alignat*}
from which $(u_h, q_h, p_h)$ can be locally solved in terms of $f$, $\widehat{u}_h$ and $\widehat{p}_h^{\,-}$,
and the equations
\begin{alignat*}{1}
\langle q_h+\tau_{qu}(\widehat{u}_h-u_h)n, \mu\, n\rangle +\langle \tau_{qp} (\widehat{p}_h^{\,-}-p_h), \mu \rangle_{\partial{\mathcal T}_h^-}&=\langle q_N, \mu\,n\rangle_{\partial\Omega_N},\\
\langle \widehat{p}_h^{\,-}, \chi\, n\rangle_{\partial{\mathcal T}_h^-}+\langle p_h+\tau_{pu}(\widehat{u}_h-u_h)n, \chi\,n\rangle_{\partial{\mathcal T}_h^+}&\\
+\langle F(\widehat{u}_h)-\tau_F(\widehat{u}_h, u_h)(\widehat{u}_h-u_h)n, \chi\,n\rangle
&=0,
\end{alignat*}
which determine the globally coupled unknowns $(\widehat{u}_h, \widehat{p}_h^{\,-})$.

Next, we apply the Newton-Raphson method to solve the above nonlinear system. 
Denoting the approximations at the current iteration by $(\bar{u}_h, \bar{q}_h, \bar{p}, \bar{\widehat{u}}_h, \bar{\widehat{p}}_h^{\,-})\in W_h^k\times W_h^k\times W_h^k\times M_h(u_D)\times \tilde{M}_h$, we want to find the increments $(\delta u_h, \delta q_h, \delta p_h, \delta \widehat{u}_h, \delta \widehat{p}_h^{\,-})  \in W_h^k\times W_h^k\times W_h^k\times M_h(0)\times \tilde{M}_h$ such that
\begin{alignat*}{1}
a_1(\delta q_h, v)+b_1(\delta u_h, v)+d_1(\delta \widehat{u}_h, v)&= r_1(v),\\
a_2(\delta p_h, z)-b_1(z, \delta q_h)+c(\delta u_h, z)+d_2(\delta\widehat{u}_h, z)+e_2(\delta\widehat{p}_h^{\,-}, z)&=r_2(z),\\
a_3(\delta u_h, w)+b_2(\delta p_h, w)+d_3(\delta\widehat{u}_h, w)+e_3(\delta\widehat{p}_h^{\,-}, w) &= r_3(w),\\
\end{alignat*}
and
\begin{alignat*}{1}
g_1(\delta p_h, \mu)+g_2(\delta q_h, \mu)+g_3(\delta u_h, \mu)+d_4(\delta\widehat{u}_h, \mu)+e_4(\delta\widehat{p}_h^{\,-}, \mu)&= r_4(\mu),\\
g_4(\delta p_h, \chi)+g_5(\delta u_h, \chi)+d_5(\delta\widehat{u}_h,\chi)+e_5(\delta\widehat{p}_h^{\,-}, \chi) &=r_5(\chi),
\end{alignat*}
for any $(v, z, w, \mu,\chi)\,\in\,W_h^{k}\times W_h^{k} \times W_h^{k}\times\tilde{M}_h\times M_h(0)$, where
\begin{alignat*}{1}
a_1(\eta, v)&=(\eta, v), \qquad
b_1(\sigma, v)=(\sigma, v_x), \qquad
d_1(\lambda, v)=-\langle \lambda, v n\rangle,\\
a_2(\rho, z)&= (\rho, z)+\langle \tau_{qp} \rho, z\rangle_{\partial\mathcal T_h^-}, \qquad
c(\sigma, z)=\langle \tau_{qu} \sigma, z \rangle,\\
d_2(\lambda, z)&= -\langle \tau_{qu}\lambda, z\rangle,\qquad
e_2(\zeta, z)=-\langle \tau_{qp} \zeta, z\rangle_{\partial\mathcal T_h^-}, \\
a_3(\sigma, w)&=\frac{1}{\Delta t}(\sigma, w)-(F'(\bar{u}_h)\sigma, w_x)-\langle \tau_{pu}\sigma, w\rangle_{\partial\mathcal T_h^+} +\langle (\bar{\tau}_F-\partial_2\bar{\tau}_F(\bar{\widehat{u}}_h-\bar{u}_h))\sigma, w\rangle,\\
b_2(\rho,w) &= -(\rho, w_x)-\langle \rho, w \rangle_{\partial\mathcal T_h^+},\qquad
e_3(\zeta, w)=\langle \zeta, w\rangle_{\partial\mathcal T_h^-},\\
d_3(\lambda, w)&=\langle (F'(\bar{\widehat{u}}_h)n-\partial_1\bar{\tau}_F(\bar{\widehat{u}}_h-\bar{u}_h)-\bar{\tau}_F)\lambda, w\rangle+\langle\tau_{pu}\lambda,w\rangle_{\partial\mathcal T_h^+},\\
g_1(\rho,\mu)&=-\langle \tau_{qp}\rho,\mu\rangle_{\partial\mathcal T_h^-},\qquad
g_2(\eta,\mu)=\langle \eta,\mu n\rangle,\qquad
g_3(\sigma,\mu)=-\langle\tau_{qu}\sigma, \mu\rangle,\\
g_4(\rho, \chi)&= \langle \rho, \chi n\rangle_{\partial\mathcal T_h^+},\qquad
g_5(\sigma, \chi)=-\langle\tau_{pu}\sigma,\chi\rangle_{\partial\mathcal T_h^+}+\langle (\bar{\tau}_F-\partial_2\bar{\tau}_F (\bar{\widehat{u}}_h-\bar{u}_h)) \sigma, \chi\rangle,\\
d_4(\lambda, \mu)&=\langle\tau_{qu}\lambda,\mu\rangle,\qquad
e_4(\zeta,\mu)=\langle \tau_{qp}\zeta, \mu\rangle_{\partial\mathcal T_h^-},\qquad
e_5(\zeta, \chi)=\langle\zeta,\chi n\rangle_{\partial\mathcal T_h^-},\\
d_5(\lambda,\chi)&=\langle \tau_{pu}\lambda, \chi\rangle_{\partial\mathcal T_h^+}+\langle (F'(\bar{\widehat{u}}_h)n-\partial_1 \bar{\tau}_F (\bar{\widehat{u}}_h-\bar{u}_h-\bar{\tau}_F))\lambda,\chi\rangle,\\
r_1(v)&=-(\bar{q}_h, v)-(\bar{u}_h, v_x)+\langle\bar{\widehat{u}}_h, vn\rangle,\\
r_2(z)&=-(\bar{p}_h,z)+(\bar{q}_{hx}, z)+\langle \tau_{qu}(\bar{\widehat{u}}_h-\bar{u}_h),z\rangle+\langle\tau_{qp}(\bar{\widehat{p}}_h^{\,-}-\bar{p}_h), z\rangle_{\partial\mathcal T_h^-},\\
r_3(w)&=(f+\frac{1}{\Delta t}(u_h^{j-1}-\bar{u}_h), w)+({\bar{p}_h}+F(\bar{u}_h), w_x)
-\langle \bar{\widehat{p}}^-_h, w\rangle_{\partial\mathcal T_h^-}\\
&\;\;\;\;-\langle \bar{p}_h n+\tau_{pu}(\bar{\widehat{u}}_h-\bar{u}_h), w  \rangle_{\partial\mathcal T_h^+}
-\langle F(\bar{\widehat{u}}_h)n-\bar{\tau}_F(\bar{\widehat{u}}_h-\bar{u}_h), w \rangle,\\
r_4(\mu) &=\langle q_N, \mu n \rangle_{\partial\Omega_N}-\langle\bar{q}_h n+\tau_{qu}(\bar{\widehat{u}}_h-\bar{u}_h), \mu\rangle-\langle\tau_{qp}(\bar{\widehat{p}}_h^{\,-} -\bar{p}_h), \mu\rangle_{\partial\mathcal T_h^-},\\
r_5(\chi) &= -\langle \bar{\widehat{p}}_h^{\,-}, \chi n\rangle_{\partial\mathcal T_h^-} -\langle \bar{p}_h n+\tau_{pu}(\bar{\widehat{u}}_h-\bar{u}_h), \chi\rangle_{\partial\mathcal T_h^+}\\
&\;\;\;\; -\langle F(\bar{\widehat{u}}_h)-\bar{\tau}_F (\bar{\widehat{u}}_h-\bar{u}_h), \chi\rangle.
\end{alignat*}
Here we have used the notation $\bar{\tau}_F:=\tau_F(\bar{\widehat{u}}_h, \bar{u}_h)$, and $\partial_1 \bar{\tau}_F$ (respectively, $\partial_2\bar{\tau}_F$) denotes the first-order partial derivative of $\tau_F$ with respect
to the first argument (respectively, second argument) evaluated at $(\bar{\widehat{u}}_h, \bar{u}_h)$.

The discretization of the system above gives rise to matrix equations of the form
\begin{equation}\label{eq:local_matrix_eq}
  \begin{bmatrix}
  0&  A_1 & B_1\\
  A_2 & -B_1^T & C \\
  B_2 &0 & A_3
  \end{bmatrix}
  \begin{bmatrix}
  \delta p_h\\
  \delta q_h\\
  \delta u_h
  \end{bmatrix}
  + \begin{bmatrix}
    D_1 & 0\\
    D_2 & E_2\\
    D_3 & E_3
    \end{bmatrix}
    \begin{bmatrix}
    \delta \widehat{u}_h\\
    \delta \widehat{p}_h^{\,-}
    \end{bmatrix}
  =\begin{bmatrix}
    R_1\\
    R_2\\
    R_3
    \end{bmatrix},
\end{equation}
and
\begin{equation}\label{eq:global_matrix_eq}
  \begin{bmatrix}
G_1 & G_2 & G_3\\
G_4 & 0   & G_5
   \end{bmatrix}
   \begin{bmatrix}
   \delta p_h\\
   \delta q_h\\
   \delta u_h
   \end{bmatrix}
   +\begin{bmatrix}
   D_4 & E_4\\
   D_5 & E_5
   \end{bmatrix}
   \begin{bmatrix}
   \delta\widehat{u}_h\\
   \delta\widehat{p}_h^{\,-}
   \end{bmatrix}
 = \begin{bmatrix}
   R_4\\
   R_5
   \end{bmatrix}.
\end{equation}
From \eqref{eq:local_matrix_eq}, we get
\begin{equation}\label{eq:local_sol}
  \begin{bmatrix}
  \delta p_h\\
  \delta q_h\\
  \delta u_h
  \end{bmatrix}
  =   \begin{bmatrix}
  0&  A_1 & B_1\\
  A_2 & -B_1^T & C \\
  B_2 &0 & A_3
  \end{bmatrix}^{-1}
  \Bigg(\begin{bmatrix}
    R_1\\
    R_2\\
    R_3
    \end{bmatrix}
   -\begin{bmatrix}
    D_1 & 0\\
    D_2 & E_2\\
    D_3 & E_3
    \end{bmatrix}
    \begin{bmatrix}
    \delta \widehat{u}_h\\
    \delta \widehat{p}_h^{\,-}
    \end{bmatrix}
    \Bigg)
    \end{equation}
 We emphasize that the above inverse can be computed on each element independently of each other since the matrices $A_1, A_2, A_3, B_1, B_2$ and $C$
 are block-diagonal owing to the discontinuous nature of the approximation spaces. Applying \eqref{eq:local_sol} to \eqref{eq:global_matrix_eq}, we get the global linear system 
 \[\mathbb{K}    \begin{bmatrix}
    \delta \widehat{u}_h\\
    \delta \widehat{p}_h^{\,-}
    \end{bmatrix}
    =\mathbb{F},
    \]
 where
 \begin{alignat*}{1}
&\mathbb{K}=\begin{bmatrix}
       D_4 & E_4\\
       D_5 & E_5
     \end{bmatrix}
   - \begin{bmatrix}
      G_1 & G_2 & G_3\\
      G_4 & 0   & G_5
    \end{bmatrix}
        \begin{bmatrix}
     0&  A_1 & B_1\\
     A_2 & -B_1^T & C \\
     B_2 &0 & A_3
    \end{bmatrix}^{-1}
    \begin{bmatrix}
    D_1 & 0\\
    D_2 & E_2\\
    D_3 & E_3
    \end{bmatrix}
\intertext{and}
  & \mathbb{F}=  \begin{bmatrix}
     R_4\\
     R_5
     \end{bmatrix}
    -\begin{bmatrix}
      G_1 & G_2 & G_3\\
      G_4 & 0   & G_5
    \end{bmatrix}
    \begin{bmatrix}
     0&  A_1 & B_1\\
     A_2 & -B_1^T & C \\
     B_2 &0 & A_3
    \end{bmatrix}^{-1}
    \begin{bmatrix}
    R_1\\
    R_2\\
    R_3
    \end{bmatrix}.
 \end{alignat*}
 Therefore, the only globally coupled degrees of freedom are those associated with $\delta\widehat{u}_h$ and $\delta\widehat{p}_h^{\,-}$, which live only on element interfaces. Due to the one-dimensional setting of the KdV equation, the size and the bandwidth of the global linear system are independent of the degrees of polynomials used; it only depends on the number of subintervals in the mesh. Once $\delta\widehat{u}_h$ and $\delta\widehat{p}_h^{\,-}$ are obtained, $(\delta p_h, \delta q_h, \delta u_h)$ can be locally computed by using \eqref{eq:local_sol}.

\bigskip
\bigskip
\noindent {\bf Acknowledgements}
 The author would like to acknowledge the support of National Science Foundation grant DMS-1419029.


\end{document}